\numberwithin{equation}{section}
\newtheorem{theorem}{Theorem}[section]
\newtheorem{lemma}[theorem]{Lemma}
\newtheorem{proposition}[theorem]{Proposition}
\newtheorem{corollary}[theorem]{Corollary}
\theoremstyle{definition}
\newtheorem{definition}[theorem]{Definition}
\newtheorem{example}[theorem]{Example}
\newtheorem{remark}[theorem]{Remark}
\DeclareMathOperator{\Id}{Id}
\newcommand{\C}{\mathbb{C}}
\newcommand{\Z}{\mathbb{Z}}
\newcommand{\ol}[1]{\hat{#1}}
\newcommand{\iy}{\infty}
\newcommand\al{\alpha}
\newcommand\ga{\gamma}
\newcommand\tha{\theta}
\newcommand\wt{\widetilde}
\newcommand\thalf{\tfrac12}
\newcommand\app{\mathcal{A}}
\newcommand\cf{a}
\newcommand\fn{g}
\newcommand\bp{u}
\newcommand\cotr{\psi}
\newcommand{\defeq}{\mathrel{\mathop:}=}
\renewcommand{\paragraph}[1]{\par{#1}}
\begin{document}
\title{Approximation systems}

\author[V. A. Pessers]{Victor A. Pessers}
\email{vpessers@gmail.com}

\author[T. H. Koornwinder]{Tom H. Koornwinder}
\email{T.H.Koornwinder@uva.nl}
\address{Author 2:
Korteweg-de Vries Institute, University of Amsterdam,
P.O.~Box 94248,
1090~GE~Amsterdam, The Netherlands}

\keywords{Generalized Taylor approximation, Generalized Picard iteration, Functional differential equation, Overconvergence}
\subjclass[2010]{30E10, 41A99, 34K07}

\begin{abstract}
We introduce the notion of an approximation system as a generalization of
Taylor approximation, and we give some first examples. Next we
develop the general theory, including error bounds and a
sufficient criterion for convergence. More examples follow. We conclude the article with a description of numerical implementation and directions for future research. Prerequisites are mostly elementary complex analysis.
\end{abstract}

\maketitle


\section{Introduction}
Holomorphic functions can be characterized by the property that they can
locally be approximated by a power series, viz.\ the Taylor
series. When we are given a holomorphic function $\fn$ on a simply connected open set $U\subset \C$,
and we denote its derivatives at a point $\bp\in U$ by
$\cf_i\defeq\fn^{(i)}(\bp)$, then $\fn$ is in fact the unique solution
for $\fn_0$ in the following infinite system of differential equations:
\begin{equation*}
\fn_i(\bp)=\cf_i,\qquad \fn'_i(x) = \fn_{i+1}(x)\quad(x\in U).
\end{equation*}
We could truncate
this infinite set of equations after $n$ steps, so that we get:
\begin{equation}\label{preMain}
\fn_i(\bp)=\cf_i\quad(i\leq n)
,\qquad \fn'_i(x) = \fn_{i+1}(x)
\quad(x\in U,\; i< n).
\end{equation}
In terms of integrals this can be restated as:
\begin{equation}\label{preMainIntegral}
\fn_i(x)= \cf_i + \int_{\bp}^x \fn_{i+1}(t)\,dt
\quad(x\in U,\;i< n).
\end{equation}
Here a holomorphic function $g_n$ such that $g_n(u)=a_n$ should be specified. In particular, if we pick $g_n$ to be constant $a_n$, the resulting solution for $g_0$ will be the $n$-th order Taylor polynomial.

In this paper we will introduce a generalization of the Taylor
approximation, called {\em approximation systems},
based on the following alteration of \eqref{preMain}:
\begin{equation}\label{alteration Taylor}
\fn_i(\bp)=\cf_i,\qquad \fn'_i(x) = f_i(x,\fn_{i+1}(x)),
\end{equation}
where each $f_i$ is a holomorphic function in two complex variables. To get the
generalized equivalent for the $n$-th order Taylor approximation, we truncate
again these equations after $n$ steps, so that rewriting them in terms of
integrals gives:
\begin{equation}\label{trunDE}
\fn_n(\bp)\defeq \cf_n,\qquad
\fn_i(x)\defeq \cf_i + \int_{\bp}^x f_i\big(t,\fn_{i+1}(t)\big)\,dt
\quad(x\in U,\;i<n).
\end{equation}
By picking the constant function $\cf_n$ for $\fn_n$, we get after $n$
integrations a solution for $\fn_0$. As was the case for Taylor
approximations, the solution to the truncated sequence of equations
\eqref{trunDE} serves as an approximation for the unique\footnote{The unicity will be proven in Proposition \ref{15}.} holomorphic function $\fn$
which satisfies the complete set of differential equations
\eqref{alteration Taylor}. As will be shown in Example \ref{TaylorApprox}, we
get the Taylor approximation by simply putting $f_i(x,y)=y$.

\paragraph{}

The concept of approximation system, introduced in this article, is thus a generalization of the Taylor
approximation, in the sense that it allows for a possibly non-linear
function $f_i$ at each single integration step. By its non-linear nature, the resulting approximations will in general not involve a series. 
So although our method can be regarded as a generalized Taylor approximation, it is not a generalized Taylor series (such as can be found in, e.g. \cite{HuSe,IsSt,OdSh}), nor does it include any of these generalized Taylor series as a special case.

The method can also be regarded as a generalization of Picard iteration, in the sense that if we
take $\cf_i=\cf$ and $f_i=f$, i.e.,
if we take them identical for each $i\geq 0$, then \eqref{preMain}
essentially reduces to a single differential equation. The $n$-th
order approximation to it then coincides with the $n$-th Picard
iteration, as will be shown in more detail in Example \ref{ODE}.

\paragraph{}

It should be noted that there are several earlier instances of
the term ``approximation system'' in literature which are unrelated to each
other and to its meaning here. See for instance \cite{KhBa} and~\cite{Si}.

The first ideas about approximation systems and many of the basic properties
and the examples were already formulated by the first author in his BSc Thesis
\cite{Pe1}. He introduces a related but different concept called expansion
system in his MSc Thesis \cite{Pe2}.

\subsection{Preliminary remarks}

\noindent Throughout this paper, we keep the assumption that all functions involved are holomorphic. Denote the function to be approximated by $\fn$ and its domain by $U$. It is always assumed that $U$ is an open, connected and simply connected set in the complex plane. We make these assumptions for convenience; for much of the theory they can be relaxed. For instance, if one considers the case in which $U$ is an open interval on the real line instead, this will essentially provide the same approximations as the ones defined on an open subset of the complex plane containing this real interval, like is the case for the Taylor series.

Also the requirement of holomorphy is not strictly necessary, but reasonable nonetheless. Later on we will see in more detail that the coefficients $\cf_i$ are completely determined by the behavior of $\fn$ at the base point $\bp$ (again, like is the case for the Taylor approximation). So if, for example, we would only require that the function $\fn$ is $C^\infty$, then there exists no longer a relationship between the provided approximations and $\fn$'s behaviour outside of a small neighbourhood of the base point $\bp$. This makes the algorithm particularly useful for analytic functions.

The assumption that the domain $U$ is open in $\C$ is not essential either, but it is made primarily for consistency in the conditions under which various theorems are
proven. If we would allow for holomorphic functions on a not necessarily open set $U$, this would each time have to be interpreted as that the function is holomorphic on some open neighbourhood of $U$.

\subsection{Contents}

\noindent In Section~\ref{SC:definition} we define approximation systems
as a framework for approximating holomorphic functions, and we will already encouter some particular examples. However, in order to prove the convergence of such approximation systems, we will have to develop some useful theorems first. This will be done in sections~\ref{SC:transformations} to \ref{SC:convergence criterion}. In Section~\ref{SC:transformations} we consider some ways in which an approximation system can be transformed to another approximation system, and how the resulting approximations are then related to each other. Next we prove in Section~\ref{SC:error estimate} a general error estimate for approximation systems, which can be regarded a generalization of the classical error estimate for the Taylor series.
Section~\ref{SC:convergence criterion} starts with a discussion of the
relationship between an approximation system and the derivatives of the approximated function at its base point. We use this relationship to formulate conditions that will guarantee that the provided approximations really converge to the intended function (Theorem \ref{TH:DomImpConv} and \ref{CorCritApp}). In Section~\ref{SC:examples} we will then consider some more examples of
approximation systems, for which we will then be able to prove specific estimates of the maximal error.
Section~\ref{numImpl} is a short section in which we describe how approximation systems can be numerically implemented. We conclude this paper with some directions for further research.

\subsection{Notation}
\begin{itemize}
\item $B(c,R)$ denotes the open disk in $\C$ with center $c$ and radius $R$.
\item Given a map $\phi:U\to U$, by $\phi^{\circ\,n}$ we denote the $n$-fold composition of this map, so $\phi^{\circ\,0}\defeq\Id$ and
$\phi^{\circ(n+1)}\defeq\phi\circ\phi^{\circ\,n}$ for $n\geq 0$.
\item $D_1f$ and $D_2f$ denote the derivative of a function $f$ of two variables with respect to its first and second argument, respectively.
\item $\|f\|_Y$ denotes the sup norm of a function $f$ restricted to the domain $Y$.
\end{itemize}

\section{Definition and first examples}\label{SC:definition}

\subsection{Definition of approximation systems}

First we give the technical definition of an approximation system
(AS for short).
This only specifies some data:
a sequence of functions with their domains,
a sequence of coefficients, and a special point.
Next we define for a given function $g$ on $U$ what it means for an
AS to be an approximation system for $g$.

\begin{definition}\label{DF:approximation system}
An {\em approximation system} $\app^r = \left(\{f_i\},\{\cf_i\},\bp\right)$ is a triple consisting of:\nopagebreak
\begin{itemize}
\item
A sequence $\{f_i\} = \{f_i\}_{i=0}^{r-1}$ of holomorphic functions $f_i\colon Y_i \to \C$ in two variables,
where $Y_i = U\times V_i\subset\C^2$ with $U$ and $V_i$ open
and $U$ simply connected, and where
$f_i(x,\,.\,)$ is non-constant for all $x\in U$.
\item A sequence of complex constants $\{\cf_i\} = \{\cf_i\}_{i=0}^r$, called the {\em coefficients}, such that $\cf_{i+1}\in V_i$ for $i<r$.
\item A special point $\bp\in U$, called the
\em base point of the approximation system.
\end{itemize}
\end{definition}

In the above definition, the open set $U$ and the number $r$ are properties of the approximation system that are implicitly determined by the sequence of functions $\{f_i\}_{i=0}^{r-1}$. The set $U$ is what we call the {\em domain of the approximation system}, and $r$ is called its {\em order}. This order can be any nonnegative integer\footnote{In case $r=0$, $\{f_i\}$ just becomes an empty sequence.}, but we will also allow the possibility of $r$ being infinite, in which case our
sequences must be interpreted as $\{\cf_i\}_{i=0}^\infty$ and
$\{f_i\}_{i=0}^\infty$, i.e. sequences with domain $\Z_{\geq 0}$.


\begin{definition}\label{1}
An approximation system $\app^r = \left(\{f_i\},\{\cf_i\},\bp\right)$ is said to be
an {\em approximation system for the function~$\fn$} if there
exists a sequence $\{\fn_i\}_{i=0}^r$ of holomorphic
functions on $U$
such that $g=g_0$, $\fn_{i+1}(U)\subset V_i$ for all $i<r$,
and the following equations hold\footnote{$U$ and $V_i$ are defined by the domain $Y_i$ of $f_i$.}:
\begin{align}
\fn_i(\bp) &= \cf_i &(i<r+1)\label{EQ coefficient}\\
\fn'_i(x) &= f_i(x,\fn_{i+1}(x)) &(x\in U,\;i<r)\label{MainEquation}
\end{align}
In that case, we more specifically call $\app^r$ an AS for the sequence $\{\fn_i\}_{i=0}^r$\,.
\end{definition}

\begin{remark}
If $\app^r = \left(\{f_i\},\{\cf_i\},\bp\right)$ is an AS, then it follows easily from Definition \ref{DF:approximation system} that for $n < r$ also the truncated $\app^n := \left(\{f_i\}_{i=0}^{n-1},\{\cf_i\}_{i=0}^n,\bp\right)$ is an AS. Moreover, if $\app^r$ is an AS for the sequence $\{\fn_i\}_{i=0}^r$, then $\app^n$ is an AS for the sequence $\{\fn_i\}_{i=0}^n$. We will often use this principle that an AS of certain order entails all its truncated AS's of arbitrary lower order. Thus, statements proven about $\app^r$ usually imply similar statements for all truncations $\app^n$ for $n<r$. Likewise, statements about an unbounded AS $\app^\infty$ usually take the form of statements about all its truncations $\app^n$ for $n\in \Z_{\geq 0}$.
\end{remark}

\begin{remark} \label{5}

Note that according to the definition of an AS for a sequence $\{\fn_i\}_{i=0}^r$, $\app^r$ describes a set of differential equations which is satisfied by the sequence of functions
$\{\fn_i\}_{i=0}^r$. Except from the relation between $\fn_i$ and $\fn_{i+1}$, there aren't any further relations imposed between
functions $\fn_i$ and $\fn_j$ for other $i$ and $j$, as would be the case for most kinds of differential equations. The whole set of
equations, as described in \eqref{MainEquation}, can also be
formulated as:
\begin{align}
\fn_r(u) &= a_r &(\mbox{in case}\ r<\infty)\notag\\
\fn_i(x) &= \cf_i + \int_{\bp}^x f_i\big(t,\fn_{i+1}(t)\big)\,dt &(x\in U,\;i<r)\label{6}
\end{align}

Now suppose that $\app^r$ is given as in Definition \ref{DF:approximation system}, without any of the functions $\fn_i$ (including $\fn=\fn_0$) as in Definition \ref{1}.
Let a holomorphic function
$\fn_n\colon U\to V_{n-1}$ be given with $\fn_n(\bp)=\cf_n$.
Then unique holomorphic functions $\fn_i$ ($i<n$) exist on a sufficiently small
simply connected open neighbourhood $U_0\subset U$ of $\bp$
such that the truncated $\app^n$ is an AS for for the sequence $\{\fn_i\}_{i=0}^n$\,.
The functions $\fn_i$ can recursively be constructed by \eqref{6}.
\end{remark}

\begin{remark}\label{10}
Suppose in Definition \ref{1} only the holomorphic functions $\{f_i\}$
and the base point of the AS are given, but not the coefficients $\{\cf_i\}$. If we then have a sequence $\{\fn_i\}_{i=0}^r$ satisfying \eqref{MainEquation}, then equation \eqref{EQ coefficient} can be used as a definition for the $\cf_i$.
With $\{\cf_i\}_{i=0}^r$ thus chosen, the sequence $\{\fn_i\}_{i=0}^r$
will satisfy the whole of Definition \ref{1}.
For this reason, when mentioning an AS with respect to a certain sequence $\{\fn_i\}$, we may omit explicating the values of the $\cf_i$, since they are implied by the $\fn_i$ after all. Furthermore, upon mentioning an approximation system $\app^r$, we will tacitly adopt the notation of Definition \ref{1} for denoting its components (namely $\{f_i\}$ and $\{\cf_i\}$) and for its base point (namely $\bp$), unless explicitly stated otherwise. Because in many cases the base point of an AS either can be understood
from context or has an abstract value $\bp$, it is usually not even necessary to mention the base point specifically. In case of possible confusion we will write $\fn\colon(U,\bp)\to \C$ to emphasize that $\bp$ is the base point.
\end{remark}

Let us now apply the idea of
Remark \ref{5} by choosing an approximation for $\fn_n$, which we will call $\fn_n^{[n]}$, that is constantly equal to $\cf_n$. So suppose we are given $\app^r$ as an AS for $\{\fn_i\}$. To approximate $\fn\defeq \fn_0$, we first put $\fn_n^{[n]}\defeq \cf_n$ as an approximation for $\fn_n$, where $n$ is a fixed number such that $0\leq n<r+1$.
For $i<n$, we may define $\fn_i^{[n]}$ recursively using equation \eqref{6}, i.e.:
\begin{align}
\fn_n^{[n]}(x) &\defeq\cf_n &(x\in U)\\
\fn_i^{[n]}(x) &\defeq\cf_i + \int_{\bp}^x f_i\big(t,\fn_{i+1}^{[n]}(t)\big)\,dt &(x\in U,\;i<n)\label{Approximation}
\end{align}
provided this makes sense because of the inclusions
\begin{equation}\label{2}
\fn_{i+1}^{[n]}(U)\subset V_i\qquad(i<n)
\end{equation}
being valid.
As the $n$-th order approximation of $\fn$, we finally put:
\begin{equation*}
\fn^{[n]}\defeq\fn_0^{[n]}.
\end{equation*}

It should be emphasized that, throughout the whole construction of $\fn^{[n]}$, we have not used any information about the sequence $\{\fn_i\}$, except from the information already contained in the components of $\app^r$. Hence these approximations are uniquely determined by $\app^r$ itself, and are thus independent of the sequence $\{\fn_i\}$. Nonetheless, in view of approximation theorems to be given later in this paper, when $\app^r$ is an AS for $\{\fn_i\}$, then the functions $\fn^{[n]}$ usually turn out to approximate the function $\fn$ as $n$ gets bigger. This motivates the following summarizing definition.

\begin{definition}\label{3}
$\app^r$ is called
a {\em proper approximation system
for order $n$} if condition \eqref{2} holds at each step in the construction of the functions $\fn_i^{[n]}$ defined by
\eqref{Approximation}. In that case, $\fn^{[n]}$ is called the $n$-th order approximation of $\app^r$.
If $\app^r$ is a proper approximation system for every order $n<r+1$, we simply call
it a {\em proper approximation system} (or shortly PAS).
\end{definition}

\begin{remark}\label{RM A^n AS for g^[n]}
Given an AS $\app^r$ which is proper for order $n<r+1$, then it follows directly from the construction of $\fn^{[n]}$ that the truncated AS $\app^n$ is a proper approximation system for $\fn^{[n]}$. In fact, $\fn^{[n]}$ can be characterized as the unique function $\wt g$ such that $\app^n$ is an AS for $\wt g$ and $\wt g_n$ is constant.
\end{remark}

\begin{remark}
In most cases, the domains can be chosen in such a way that condition
\eqref{2} is satisfied. In general, for given $n$, we can always
shrink $U$ to a smaller open simply connected neighborhood
of $\bp$ such that \eqref{2} is satisfied.
\end{remark}
\goodbreak
\subsection{Examples of approximation systems}

\begin{example}\label{InvFun}
Suppose we have a holomorphic function $\fn\colon U\to \C$. Then
every sequence of invertible holomorphic functions $\{f_i\}_{i=0}^{r-1}$
in one variable (i.e. $f_i(x,y)$ only depends on $y$) with
sufficiently large range can be used for an approximation system for
$\fn$. The sequence $\{\fn_i\}_{i=0}^r$ is uniquely determined by $\fn_0 =
\fn$ and $\fn_{i+1}\defeq f_i^{-1}\circ \fn'_i$.

For instance, if we take $\fn(x) = e^x$ and
$f_i(x,y)={y^{i+2}}/{(i+1)!}$, then one may verify that this leads to an AS where $\fn_i$ is determined to be $e^{x/(i+1)!}$. This (somewhat artificial) example, where $f_i$ is a power of varying degree, may serve to demonstrate that in general there is much freedom when it comes to picking the functions $f_i$. In practice however, we will focus on AS's where the function $f_i$ stays more or less the same, except for an accompanying coefficient which may vary with respect to $i$. 
\end{example}

\begin{example}\label{TaylorApprox}{\rm\bf Taylor approximation}\\
When in Example \ref{InvFun} we let $f_i(x,y)=y$ and $V_i = \C$ for all $i\geq 0$, this
gives rise to a PAS $\app^\infty$ for the function $\fn\colon U\to\C$, such
that $\fn_i = \fn^{(i)}$ (and therefore $\cf_i = \fn^{(i)}(\bp)$). Because
\[
\cf_i+\int_{\bp}^x \sum_{k=i+1}^n \cf_k\frac{t^{k-i-1}}{(k-i-1)!}\,dt =
\sum_{k=i}^{n} \cf_k\frac{(x-\bp)^{k-i}}{(k-i)!}
\]
and
\[
\fn_n^{[n]}(x)=\cf_n,
\]
we get by induction that
\[
\fn^{[n]}(x)=\fn_0^{[n]}(x)=\sum_{k=0}^{n} \cf_k\frac{(x-\bp)^k}{k!}
=\sum_{k=0}^{n} \fn^{(k)}(\bp)\,\frac{(x-\bp)^k}{k!}\,,
\]
which is the $n$-th order Taylor approximation.
\end{example}

\begin{remark}
The convergence of the Taylor series is commonly expressed by the formula:
\begin{equation}\label{EQ taylor abr}
\fn(x) = \fn(\bp)+\fn'(\bp) \frac{x-\bp}{1!}+\fn''(\bp) \frac{(x-\bp)^2}{2!} + \ldots
\end{equation}
where the lower dots stand for those terms that will come after the last given term. 


As we will investigate more closely later on, AS's also provide accurate approximations for holomorphic functions. We will often express the convergence of an AS in similar fashion as in \eqref{EQ taylor abr}.
The general formula is then given as a nesting of integrals and functions:
\begin{equation}\label{EQ nested form}
\fn(x) = a_0 + \int_{\bp}^x f_0 \Big(x_1,a_1 + \int_{\bp}^{x_1} f_1 \Big(x_2,a_2 + \int_{\bp}^{x_2} f_2 \Big(x_3,a_3 + \ldots\Big)dx_3\,\Big)dx_2\,\Big)dx_1
\end{equation}
For instance, taking the concrete example given at the end of Example \ref{InvFun}, we obtain the formula:
\begin{equation}\label{EQ exp powers}
e^x = 1+\int_0^x \Big(1+\int_0^{x_1} \frac{1}{2!}\Big(1+\int_0^{x_2} \frac{1}{3!}\Big(1+\ldots \Big)^4
dx_3 \Big)^3dx_2 \Big)^2dx_1.
\end{equation}
Obtaining an $n$-th order approximation from such a formula, works quite similar to how one would obtain it from \eqref{EQ taylor abr}: we simply continue the formula up to the $n$-th level, and replacing the dots by $0$ at this level will then give us a precise expression for our $n$-th order approximation. Hence, formula \eqref{EQ exp powers} is simply another way of stating that the approximation system given by: 
$$f_i(x,y)=\frac{y^{i+2}}{(i+1)!}, \qquad a_i=1, \qquad u=0$$ converges to $e^x$. We will use such nested formula expressions more often in the rest of this article.
\end{remark}

\begin{example}\label{ODE}{\rm\bf Picard iteration}\quad\\
Consider the following ordinary differential equation (ODE)
on a domain $U$:
\begin{equation}\label{4}
\fn'(x)=f(x,\fn(x)),
\end{equation}
where $f\colon U\times V\to \C$ is a holomorphic
function. Suppose we have a solution $\fn$ to this differential equation
on the domain $U$ such that $\fn(\bp)$ equals a given value $a\in V$ at
a base point $\bp\in U$
and such that
$\fn(U)\subset V$. By letting $f_i\defeq f$ and $\cf_i\defeq\cf$, we obtain an AS
for the function $\fn$ by just putting $\fn_i \defeq g$ in \eqref{MainEquation}.
Now, for fixed $n$ and for $U$ a sufficiently
small open simply connected neighbourhood of $\bp$, the {\em Picard
iteration scheme} (see for instance
\cite[Ch.~1, \S3]{CoLe}):
\begin{equation}\label{Picard}
\left. \begin{aligned}
\fn_n^{[n]}(x) &\defeq\cf,\\
\fn_i^{[n]}(x) &\defeq\cf + \int_{\bp}^x f(t,\fn_{i+1}^{[n]}(t))\,dt\quad(i<n)
\end{aligned}
\quad \right\}\qquad
(x\in U).
\end{equation}
makes sense with $\fn_{i+1}^{[n]}(U)\subset V$\quad($i<n$). We recognize
\eqref{Approximation} and \eqref{2} specified for our example. Thus
our choice for the $f_i$ and $\cf_i$ gives us a PAS of any order $n$ in
the base point $\bp$, but with $U$ possibly dependent on $n$.
In fact, the general theory of ordinary
differential equations tells us that, for $U$ a sufficiently small
neigbourhood of $\bp$, we get a PAS for any order and we have
uniform convergence of $\fn^{[n]}$ to $\fn$ (see also Remark \ref{25}).

Note that it follows from \eqref{Picard} by induction with respect to
$n-i$ that we can say the following about a special case of
Definition \ref{3}:
\begin{equation}
\mbox{If\quad $f_i=f$, $\cf_i=\cf$ for all $i$\quad then\quad
$\fn_i^{[n]}=\fn_{i-1}^{[n-1]}=\ldots=\fn_0^{[n-i]}=\fn^{[n-i]}$.}
\label{16}
\end{equation}
\end{example}

\begin{example}\label{FunDifEq}{\rm\bf Functional differential equations}\\
As a generalization of the ODE \eqref{4}, consider the
functional differential equation (FDE)
\begin{equation}\label{GeneralFDE}
\fn'(x)=f(x,\fn\circ \phi(x)),
\end{equation}
on a domain $U$, where $f\colon U\times V\to \C$ is a
holomorphic function
and $\phi\colon U\to U$ is a holomorphic endomorphism on $U$.
Although this type of differential equations is even more general than the ODE in the previous example, it likewise gives rise to an AS for $\fn$ in the sense of Definition \ref{1}. We will use such FDE's later on in this article to provide a couple of examples of AS's for which we could obtain explicit error bounds.

Now let us assume that $\fn$ is a solution of this equation
with a given value $\fn(\bp)=\cf$ at a base point $\bp\in U$, such that
$\fn(\phi(U))\subset V$.
Put $\fn_i = \fn\circ \phi^{\circ\,i}\colon U\to V$, where we recall that $\phi^{\circ\,i}$ stands for the $n$-fold composition of $\phi$.
Choose $V_i\subset V_0\defeq V$ such that
$g\big(\phi^{\circ (i+1)}(U)\big)\subset V_i$.
Then we see that the functions
\begin{equation*}
f_i(x,y)\defeq\big(\phi^{\circ\,i}\big)'(x)\,f(\phi^{\circ\,i}(x),y(x))\qquad
(i\geq 0,\;(x,y)\in U\times V_i)
\end{equation*}
yield an AS for the functions $\{g_i\}_{i\geq 0}$. Indeed,
\eqref{MainEquation} can be seen to hold as follows.
If $\fn'_i(x) = f_i(x,\fn_{i+1}(x))$ for certain~$i$, then
\begin{align*}
\fn'_{i+1}(x)&= \phi'(x)\,\fn'_i(\phi(x))\\
&= \phi'(x)\,f_i\big(\phi(x),\fn_{i+1}(\phi(x))\big)\\
&= \phi'(x)\,\big(\phi^{\circ\,i}\big)'\big(\phi(x)\big)\,
f\big(\phi^{\circ(i+1)}(x),\fn_{i+2}(x)\big)\\
&= \big(\phi^{\circ(i+1)}\big)'(x)\,
f\big(\phi^{\circ(i+1)}(x),\fn_{i+2}(x)\big)\\
&= f_{i+1}(x,\fn_{i+2}(x)),
\end{align*}
so that the result follows by induction.

Now suppose that moreover $\phi(\bp)=\bp$. Then $\cf_i=\fn(\bp)=\cf$ for all $i$.
Also suppose that the AS obtained above is proper. Then we will show
that
\begin{equation}
\fn_i^{[n]}\circ\phi=\fn_{i+1}^{[n+1]},\quad
\mbox{in particular}\quad
\fn_i^{[n]}=\fn^{[n-i]}\circ\phi^{\circ\,i}.
\label{31}
\end{equation}
For the proof, first observe that in this situation \eqref{Approximation}
is equivalent to
\begin{equation}
\fn_i^{[n]}(\bp)=\cf,\qquad \big(\fn_i^{[n]}\big)'(x)=
\big(\phi^{\circ\,i}\big)'(x)\,f\big(\phi^{\circ\,i}(x),\fn_{i+1}^{[n]}(x)\big).
\label{32}
\end{equation}
The statement is trivially satisfied for $i=n$, so let us assume that $n-i\geq 1$ and moreover that the statement holds for all $m$ and $j$ such that $m-j<n-i$. Then
\begin{align*}
\big(\fn_i^{[n]}\circ\phi\big)'(x)
&= \phi'(x)\,\big(\fn_i^{[n]}\big)'(\phi(x))\\
&= \phi'(x)\,
\big(\phi^{\circ\,i}\big)'(\phi(x))\,f\big(
\phi^{\circ\,(i+1)}(x),\fn_{i+1}^{[n]}(\phi(x))\big)\\
&= \big(\phi^{\circ\,(i+1)}\big)'(x)\,f\big(
\phi^{\circ\,(i+1)}(x),\fn_{i+2}^{[n+1]}(x)\big)\\
&= \big(\fn_{i+1}^{[n+1]}\big)'(x).
\end{align*}
Since also
$\fn_i^{[n]}\circ\phi$ and $\fn_{i+1}^{[n+1]}$ have the same value $\cf$ at $\bp$,
we conclude that \eqref{31}) holds. Furthermore, by \eqref{32} we see that
\begin{equation}
\big(\fn^{[n]}\big)'(x)=f\big(x,\fn^{[n-1]}(\phi(x))\big).
\label{33}
\end{equation}
To give a concrete example of the above setting, consider the FDE given by:
\begin{equation}
\frac{d}{dx}e^x = \left(e^{x/p}\right)^p
\end{equation}

where $p$ is a positive integer. This fits into Equation \ref{GeneralFDE}, where $\fn(x) = e^x$, $f(x) = x^p$ and $\phi(x) = x/p$.
Working out the details for this particular example, one may verify that we get the following 
approximation:

\begin{equation}\label{EQ:NFEexpX}
e^x = 1+\int_0^x \Big(1+\int_0^{x_1} \frac{1}{p}\Big(1+\int_0^{x_2} \frac{1}{p^2}\Big(1+\ldots \Big)^p
dx_3 \Big)^p dx_2 \Big)^p dx_1.
\end{equation}

In Example \ref{ExamExp} it will be proven that this AS indeed converges on the entire complex plane.

\end{example}

\begin{remark}
Note that \eqref{33} together with $\big(\fn^{[n]}\big)(\bp)=\cf$ is equivalent
with
\begin{equation*}
\big(\fn^{[n]}\big)(x)=\cf+\int_{\bp}^x f\big(t,\fn^{[n-1]}(\phi(t))\big)\,dt.
\label{40}
\end{equation*}
Define an operator $S$ acting on holomorphic functions $h$ on $U$ satisfying
$h(\bp)=\cf$ and $h(\phi(U))\subset V$ as follows:
\begin{equation}
(Sh)(x)\defeq\cf+\int_{\bp}^x f(t,h(\phi(t)))\,dt.
\label{34}
\end{equation}
Then
\begin{equation}
\fn^{[n]}=S\big(\fn^{[n-1]}\big)=\ldots= S^n(a).
\label{35}
\end{equation}

Special cases of proper approximation systems coming from FDE's
with $\bp$ being a fixpoint of $\phi$
are given in Examples \ref{ExamExp}, \ref{ExamSinh}, \ref{ExamCosh}
and Remarks \ref{RemSin}, \ref{RemCos}. There it is possible
to generate the approximations $\fn^{[n]}$ by \eqref{35} using the operator
$S$. 

It is interesting to compare our operator $S$ given by \eqref{34}
and its iteration with the operator $T$ occurring in Grimm
\cite[Proof of Theorem 1]{Gr}. The FDE there is more complicated
than \eqref{GeneralFDE} ($f$ also depending on $\fn'$ and $\phi$ also depending
on $\fn$), but our FDE can be obtained as a special case. For that
case Grimm's operator $T$ becomes
\[
(Th)(x)\defeq f\big(x,\cf+{\textstyle\int_{\bp}^{\phi(x)} h(t)\,dt}\big).
\]
Then iterates $T^n(0)$ approximate $\fn'$ rather than $\fn$ and, for
$h(\bp)=\cf$, $S$ and $T$ are connected by
\[
(Sh)'=Th',\qquad (S^nh)'=T^n h',
\]
or in integral form:
\[
(Sh)(x)=\cf+\int_{\bp}^x (Th')(t)\,dt,\qquad
(S^nh)(x)=\cf+\int_{\bp}^x (T^n h')(t)\,dt.
\]
\end{remark}

\begin{remark}\label{DDE}
Consider the AS obtained by the choices $U=V=\C$,
$f(x,y)\defeq y$, $\phi(x)\defeq x+\al^{-1}\log\al$ ($\al\in(0,1)$), $\bp\defeq0$,
$\fn(x)\defeq e^{\alpha x}$ in Example \ref{FunDifEq}. It follows that
$f_i(x,y)=y$, $\fn_i(x)=\al^i e^{\al x}$, $\cf_i=\al^i$.
The AS is proper. One easily verifies that
\[
\fn_i^{[n]}(x)=\al^i\sum_{k=0}^{n-i}\frac{\al^k x^k}{k!}\,,\quad
{\rm hence}\quad
\fn^{[n]}(x)=\sum_{k=0}^n\frac{\al^k x^k}{k!}\,.
\]
Thus the $n$-th order approximation coincides with the $n$-th order
Taylor approximation. We have here a special case of Example \ref{FunDifEq}
where $\phi$ has no fixpoint and still everything can be worked out
explicitly.
\end{remark}

\section{Transformations of approximation systems}\label{SC:transformations}

The following proposition describes how a general holomorphic endomorphism $\psi\colon \ol U\to U$ into the domain of a given approximation system, naturally leads to another approximation system on the domain $\ol U$.

\begin{proposition}\label{Tran}
Let $\app^r=(\{f_i\},\{\cf_i\},\bp)$ be an AS for the sequence $\fn_i\colon(U,\bp)\to \C\ (i<r+1)$. Then a holomorphic mapping $\cotr\colon(\ol U,\ol \bp)\to (U,\bp)$ (mapping $\ol \bp$ to $\bp$) gives rise to a new AS ${\ol \app}^r = (\{\ol f_i\},\{\cf_i\},\bp) $ for the sequence $\ol \fn_i\colon (\ol U,\ol \bp)\to \C\ (i<r+1)$, where $\ol f_i(x,y)\defeq \psi'(x) f_i(\psi(x),y)$ and $\ol \fn_i(x)\defeq\fn_i\circ\cotr(x)$.
\end{proposition}
\begin{proof}
Equation \eqref{MainEquation} is satisfied because for $i<r$ we have
\[
\ol \fn_i'(x)=\psi'(x) \fn_i'(\psi(x))
=\psi'(x) f_i(\psi(x),\fn_{i+1}(\psi(x)))
=\psi'(x) f_i(\psi(x),\ol \fn_{i+1}(x))
=\ol f_i(x,\ol \fn_{i+1}(x)).
\]
The domain of $f_i$ can be chosen as $\ol U\times V_i$, and we have $\ol \fn_i(\ol U)\subset \fn_i(U)\subset V_i$, so that the new approximation system indeed satisfies the requirements.
\end{proof}

\begin{remark}\label{TranTwo}
Let $\app^r$, $\fn_i$, $\cotr$ and $\ol f_i$
be as in Proposition \ref{Tran}.
Assume that ${\ol \app}^r=(\{\ol f_i\},\{\cf_i\},\bp)$ is an AS
for some sequence
$\ol \fn_i\colon (\ol U,\ol \bp)\to \C$ ($i<r+1$),
where the $\ol \fn_i$ are not
necessarily as in Proposition \ref{Tran}.
If $\ol \fn_n=\fn_n\circ\cotr$ for some $n<r+1$ then by
Remark \ref{5} and by the proof of Proposition \ref{Tran}
we still have $\fn_i\circ\cotr = \ol \fn_i$ for all $i\leq n$.
\end{remark}

This Remark leads us immediately to the following Lemma, which will be used
in the next section.

\begin{lemma}\label{TranLem}
Let $\app^r=(\{f_i\},\{\cf_i\},\bp)$ be a PAS and
${\ol \app}^r=(\{\ol f_i\},\{\cf_i\},\bp)$ be an AS for respectively the sequences
$\fn_i\colon (U,\bp)\to \C$ and
$\ol \fn_i\colon (\ol U,\ol \bp)\to \C\ (i<r+1)$,
such that these are related through a holomorphic transformation
$\cotr\colon (\ol U,\ol \bp)\to (U,\bp)$ (i.e.,
$\ol \fn_i(x)=\fn_i\circ\psi(x)$ and
$\ol f_i(x,y)\defeq f_i(\psi(x),y)\,\psi'(x)\,)$.
Then ${\ol \app}^r$ is also a PAS, and $\ol \fn_i^{[n]}=\fn_i^{[n]}\circ \cotr$
for all $n<r$ and $i\leq n$.
\end{lemma}

It is also possible to transform the AS by linear transformations of
the second argument of the $f_i$, as is formulated in the following
Proposition. Its proof boils down to just a straightforward
calculation like in the
proof of Proposition \ref{Tran}.

\begin{proposition}\label{LinTran}
Let $\app^r=(\{f_i\},\{\cf_i\},\bp)$ be an AS
for the sequence $\fn_i\colon (U,\bp)\to \C$ $(i<r+1)$.
Given two sequences of complex numbers $b_i$ and $c_i$ $(i\geq 0$ and $c_i\neq 0)$, then
$\ol f_i(x,y)\defeq c_i\,f_i(x,c_{i+1}^{-1}(y-b_{i+1}))$ and $\ol \cf_i \defeq b_i+c_i \cf_i$, defines another AS ${\ol \app}^r$ for the sequence
$\ol \fn_i\colon (U,\bp)\to \C$ $(i<r+1)$,
where $\ol \fn_i(x)\defeq c_i\,\fn_i(x)+b_i$.
If $\app^r$ is proper, then so is ${\ol \app}^r$, and we have:
$\ol \fn_i^{[n]}(x) = c_i\, \fn_i^{[n]}(x)+b_i$.
\end{proposition}

\begin{remark}
As a consequence of Proposition \ref{LinTran}, every approximation system can be altered in such a way that all the coefficients $\cf_i$ vanish, namely by letting $b_i = -\cf_i$ and $c_i=1$. The approximations $\ol \fn_i^{[n]}$ thus obtained, simply differ a term $\cf_i$ from the approximations in the original setting. So in principle, we could do away with the coefficients $\cf_i$ in the Definition of approximation systems (definition \ref{DF:approximation system}, if one is willing to alter the target functions $\fn_i$ so that it is valued $0$ in the base point $\bp$. Although this might simplify some matters in the theory, we believe this would also result in a more circuitous practical implementation of approximation systems, especially when the target functions initially do not equal $0$ at the base point. Nonetheless, it is noteworthy that the possibility of leaving out the coefficients $\cf_i$ is there.
\end{remark}

\section{Error estimates of the approximation}\label{SC:error estimate}

In this section, we establish error estimates for the apprimations that arise from an approximation system (Theorem \ref{TH:ErrEst}). As a result of this (Corollary \ref{CR:Starlike}), we also obtain a simpler formula for the case that the domain of the approximated function is starlike with respect to our base point. Note that unlike is the case for Taylor approximations, the domain of convergence of an approximation system is in general not circular or even starlike with respect to the base point, as will be discussed in more detail in
Section \ref{SC directions}.

In the following propositions, we denote by $\Gamma(v,w)$ the set of piecewise
$C^1$-curves in our domain $U$ connecting $v$ with $w$, and we define
a metric $d_U$ on $U$ by
\[
d_U(v,w)\defeq\inf_{\ga\in\Gamma(v,w)} \ell(\ga),
\]
where $\ell(\ga)$ denotes the length of the path $\gamma$. Furthermore, given a path $\gamma\in\Gamma(v,w)$ with interval $I=[a,b]$ as its domain, let $\ell_\gamma(s)\ (s\in[a,b])$ denote the length of the initial part of $\gamma$ on the domain $[a,s]$. We now
prove the following simple lemma, which we will use in establishing the error estimate (Theorem \ref{TH:ErrEst}).

\begin{lemma}\label{LM:path}
Let $\ga\in\Gamma(v,w)$ be a curve in $U$. Then, for integral $n\geq 0$,
\begin{equation*}
\int_\ga d_U(v,t)^n |dt| \leq \frac{\ell(\ga)^{n+1}}{n+1}\,.
\end{equation*}
\end{lemma}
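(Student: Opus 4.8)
The plan is to parametrize $\gamma$ by arclength and reduce the inequality to an elementary one-variable estimate. Write $L:=\ell(\gamma)$ and let $\gamma\colon[0,L]\to U$ be the arclength parametrization, so that $|dt|$ corresponds to $ds$ under this parametrization and $\ell_s(\gamma)=s$ for the point $t=\gamma(s)$. The first step is the key observation that $d_U(v,\gamma(s))\le s$: indeed, the initial portion of $\gamma$ from $v$ to $\gamma(s)$ is itself a $C_1$-curve in $\Gamma(v,\gamma(s))$ of length exactly $s$, so the infimum defining $d_U$ is at most $s$. This is the only place where the definition of the metric $d_U$ enters, and it is where a little care is needed — but it is not really an obstacle, just the conceptual heart of the argument.

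Granting this, the second step is a direct computation:
\begin{equation*}
\int_\gamma d_U(v,t)^n\,|dt|=\int_0^L d_U\big(v,\gamma(s)\big)^n\,ds
\le \int_0^L s^n\,ds=\frac{L^{n+1}}{n+1}=\frac{\ell(\gamma)^{n+1}}{n+1},
\end{equation*}
using $d_U(v,\gamma(s))^n\le s^n$ (valid since $n\ge 0$ and $s\ge 0$) under the integral sign. This finishes the proof.

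I expect no serious obstacle here; the statement is genuinely a lemma of convenience. The one point worth stating carefully is the legitimacy of the arclength parametrization for a general $C_1$-curve (the arclength function $s(\tau)=\ell_\tau(\gamma)$ is $C_1$ and nondecreasing, and is strictly increasing if $\gamma$ has nonvanishing derivative; if $\gamma'$ can vanish one may either reparametrize after discarding stationary intervals or simply work with the original parameter $\tau$ and the substitution $ds=|\gamma'(\tau)|\,d\tau$, noting $d_U(v,\gamma(\tau))\le \ell_\tau(\gamma)$ in all cases). Either way the bound $d_U(v,t)\le \ell_t(\gamma)$ along the curve is what drives everything, and the rest is the trivial integral $\int_0^L s^n\,ds$.
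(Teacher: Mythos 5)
Your proof is correct and is essentially the paper's own argument: both rest on the pointwise bound $d_U(v,t)\le \ell_t(\gamma)$ (the initial segment of $\gamma$ is an admissible competitor in the infimum) followed by the change to the arclength variable, giving $\int_0^{\ell(\gamma)} s^n\,ds$. Your write-up just makes the key inequality and the legitimacy of the parametrization more explicit than the paper's one-line version.
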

\begin{proof}
We have:
$$\int_\ga d_U(v,t)^n |dt| = \int_I d_U(v,\gamma(s))^n |d\gamma| \leq \int_I \ell_\gamma(s)^n |d\gamma| = \int_0^{\ell(\ga)} \theta^n d\theta = \frac{\ell(\gamma)^{n+1}}{n+1},$$
where we observe that in the second equality, applying the substitution $\theta = \ell_\gamma(s)$, we have that $d\theta$ equals $|d\gamma|$.
\end{proof}

\begin{theorem}\label{TH:ErrEst}
\begin{enumerate}[label=\bfseries\Alph*,leftmargin=0cm,itemindent=0.5cm,labelwidth=\itemindent]
\item Let $\app^r$ be an AS for the sequence
$\{\fn_i\}_{i=0}^r$, where we assume that $f_i$'s domain $Y_i = U\times V_i$ is such that $V_i$ is convex.
Assume moreover that $\app^r$ is a PAS of order $n<r+1$. Then
\begin{equation}\label{ErrEstA}
|\fn(x)-\fn^{[n]}(x)|\leq \|D_2f_0\|_{Y_0}\cdots \|D_2f_{n-1}\|_{Y_{n-1}}\,
\|\fn_n-\fn_n(\bp)\|_U\ \frac{d_U(\bp,x)^n}{n!}\quad(x\in U).
\end{equation}
\item Assume that moreover $n<r$. Then
\begin{equation}\label{ErrEstB}
|\fn(x)-\fn^{[n]}(x)|\leq \|D_2f_0\|_{Y_0}\cdots \|D_2f_{n-1}\|_{Y_{n-1}}\,
\|f_{n}\|_{U\times \fn_{n+1}(U)}\,\frac{d_U(\bp,x)^{n+1}}{(n+1)!}\quad(x\in U).
\end{equation}
\end{enumerate}
\end{theorem}

\begin{proof}
\begin{enumerate}[label=\bfseries\Alph*,leftmargin=0cm,itemindent=0.5cm,labelwidth=\itemindent]
\item We will show by downward induction with respect to $i$, starting at
$i=n$, that for $i\le n$:
\begin{equation}
\big|\fn_i(x)-\fn_i^{[n]}(x)\big|
\leq \|D_2f_i\|_{Y_i}\cdots \|D_2f_{n-1}\|_{Y_{n-1}}\,
\|\fn_n-\fn_n(\bp)\|_U\ \frac{d_U(\bp,x)^{n-i}}{(n-i)!}\quad(x\in U).
\label{7}
\end{equation}
Then the case $i=0$ of \eqref{7} yields \eqref{ErrEstA}.
Clearly, \eqref{7} holds for $i=n$ because
$\fn_n^{[n]}(x)=\cf_n=\fn_n(\bp)$.
Now suppose that for some $i<n$ \eqref{7} holds with $i$ replaced by $i+1$.
Then for $i$ we have the following string of (in)equalities:
\begin{align*}
& |\fn_i(x)-\fn_i^{[n]}(x)|
=\left|\int_{\bp}^x \big(f_i(t,\fn_{i+1}(t))-f_i(t,\fn_{i+1}^{[n]}(t))\big)\ dt\right|\\
&\qquad
\leq \inf_{\ga\in\Gamma(\bp,x)} \int_{\ga}
|f_i(t,\fn_{i+1}(t))-f_i(t,\fn_{i+1}^{[n]}(t))|\,|dt|\\
&\qquad
\leq \inf_{\ga\in\Gamma(\bp,x)}
\int_{\ga} \|D_2f_i\|_{Y_i}\,|\fn_{i+1}(t)-\fn_{i+1}^{[n]}(t)|\,|dt|\\
&\qquad
\leq \inf_{\ga\in\Gamma(\bp,x)}
\int_{\ga} \|D_2f_i\|_{Y_i}\,\|D_2f_{i+1}\|_{Y_{i+1}}\cdots
\|D_2f_{n-1}\|_{Y_{n-1}}\,\|\fn_n-\fn_n(\bp)\|_U\,
\frac{d_U(\bp,t)^{n-i-1}}{(n-i-1)!}\,|dt|.
\end{align*}
The first equality is by substitution of \eqref{6} and
\eqref{Approximation}.
In the second inequality we used the convexity of $V_i$ and in the third
inequality the induction hypothesis. Furthermore, by Lemma~\ref{LM:path},\begin{eqnarray*}
\inf_{\ga\in\Gamma(\bp,x)}
\int_{\ga} \frac{d_U(\bp,t)^{n-i-1}}{(n-i-1)!}\,|dt| \leq
\inf_{\ga\in\Gamma(\bp,x)} \frac{\ell(\ga)^{n-i}}{(n-i)!} =
\frac{d_U(\bp,x)^{n-i}}{(n-i)!}.
\end{eqnarray*}
The last equality follows by continuity and the definition of
$d_U$. Combining this result with the earlier inequalities, we conclude that \eqref{7} holds for $i$ as well, and thus the statement follows by induction.
\item
Now we may show by downward induction with respect to $i$, starting at $i=n$, that for $i\le n$:
\begin{equation}
\big|\fn_i(x)-\fn_i^{[n]}(x)\big|
\leq \|D_2f_i\|_{Y_i}\cdots \|D_2f_{n-1}\|_{Y_{n-1}}\,
\|f_n\|_{U\times \fn_{n+1}(U)}\ \frac{d(\bp,x)^{n+1-i}}{(n+1-i)!}
\quad(x\in U).
\label{8}
\end{equation}
Then the case $i=0$ of \eqref{8} yields \eqref{ErrEstB}.
Clearly, \eqref{8} holds for $i=n$ because
\begin{equation}
\big|\fn_n(x)-\fn_n^{[n]}(x)\big|=|\fn_n(x)-\fn_n(\bp)|=
\left|\int_{\bp}^x f_n(t,\fn_{n+1}(t))\,dt\right|\le
\|f_n\|_{U\times \fn_{n+1}(U)}\,d(\bp,x).
\label{9}
\end{equation}
The proof of the induction step is analogous to what we did in part A.
\end{enumerate}
\end{proof}

\begin{remark}
Because the sets $U,V_0,\ldots,V_n$ are open, the sup norms at the
right-hand side of \eqref{ErrEstA} and \eqref{ErrEstB} may possibly be
infinite, in which case the theorem becomes a trivial statement.
We will often use Theorem \ref{TH:ErrEst} in cases where the sets
$U,V_0,\ldots,V_n$ are bounded and where the holomorphic functions under
consideration extend to holomorphic functions on open neighbourhoods of
the closures of these sets. Then the sup norms are finite.
\end{remark}

\begin{remark}
Substitution of the inequality \eqref{9} for $|\fn_n(x)-\fn_n(\bp)|$ in
\eqref{ErrEstA} would have given a weaker form of the inequality
\eqref{ErrEstB}, with the denominator $(n+1)!$ replaced by $n!\,$.
\end{remark}

If the domain $U$ is starlike with respect to $\bp$ then
$d_U(\bp,x)=|x-\bp|$ for all $x\in U$. Hence we have the following
corollary to Theorem \ref{TH:ErrEst}:

\begin{corollary}\label{CR:Starlike}
\begin{enumerate}[label=\bfseries\Alph*,leftmargin=0cm,itemindent=0.5cm,labelwidth=\itemindent]
\item\label{CR Starlike A}
Assume the same conditions as in Theorem \ref{TH:ErrEst}, and further
assume that $U$ is starlike with respect to the base point $\bp\in U$. Then
\begin{equation}\label{EQ:StarlikeErrEstA}
|\fn(x)-\fn^{[n]}(x)|\leq \|D_2f_0\|_{Y_0}\cdots \|D_2f_{n-1}\|_{Y_{n-1}}
\,\|\fn_n-\fn_n(\bp)\|_U\ \frac{|\,x-\bp|^n}{n!}\quad(x\in U).
\end{equation}
\item\label{CR Starlike B}
If moreover $n<r$ then
\begin{equation}\label{EQ:StarlikeErrEstB}
|\fn(x)-\fn^{[n]}(x)|\leq \|D_2f_0\|_{Y_0}\cdots \|D_2f_{n-1}\|_{Y_{n-1}}\,
\|f_{n}\|_{U\times \fn_{n+1}(U)}\,\frac{|\, x-\bp|^{n+1}}{(n+1)!}\quad(x\in U).
\end{equation}
\end{enumerate}
\end{corollary}

\begin{example}
Consider the PAS based on the Taylor series as described in
Example \ref{TaylorApprox}. There $f_i(x,y) = y$ and $V_i=\C$ for all $i$.
Furthermore, $\fn_n=\fn^{(n)}$ and $\fn^{[n]}$ is the $n$-th order Taylor
approximation of $\fn$.
Then \eqref{EQ:StarlikeErrEstA} and \eqref{EQ:StarlikeErrEstB}
respectively give:
\begin{align*}
|\fn(x)-\fn^{[n]}(x)|&\le\|\fn^{(n)}-\fn^{(n)}(\bp)\|_U\,\frac{|x-\bp|^n}{n!}\,,\\
|\fn(x)-\fn^{[n]}(x)|&\le\|\fn^{(n+1)}\|_U\ \frac{|\, x-\bp|^{n+1}}{(n+1)!}\,.
\end{align*}
These estimates of the remainder term
coincide with familiar estimates in Taylor's Theorem.
Thus it makes good sense to consider this $\fn^{[n]}$ as an $n$-th order
approximation of $\fn$ which
generalizes the $n$-th order Taylor approximation.
\end{example}

\begin{remark} \label{25}
For a PAS with $r=\iy$ it would be desirable to have that
$\fn^{[n]}(x)\to \fn(x)$ pointwise or uniform on some neighhbourhood
of $\bp$ as $n\to\iy$. In the generality of Theorem \ref{TH:ErrEst}
this cannot be concluded from \eqref{ErrEstA} or \eqref{ErrEstB}.
However, things improve if we suppose moreover that all functions
$f_i$ are the same function $f$ on $Y_i=Y=U\times V$.
Then \eqref{ErrEstB} yields:
\begin{equation}
|\fn(x)-\fn^{[n]}(x)|\leq
\|f\|_Y\,\frac{\big(\|D_2f\|_Y\big)^n\,d(\bp,x)^{n+1}}{(n+1)!}\qquad(x\in U),
\label{18}
\end{equation}
which implies uniform convergence on $Y$ if $f$ and $D_2f$ are bounded on
$Y$.
Of course, this is still under the assumption that we are dealing
with a PAS. In particular, it is required that $\fn_{i+1}^{[n]}(U)\subset V$
for $i<n$. If moreover $\cf_i=\cf$ for all $i$ then the requirement
simplifies by \eqref{16} to $\fn^{[m]}(U)\subset V$ for $m<n$.
Then, if the AS is such that $\fn_i(u)=\cf$ for all $\fn$ and if we take for $V$
an open disk around $\cf$ and for $U$ an open disk around $\bp$ such that
$f$ and $D_2f$ are bounded on $U\times V$, then
we can shrink $U$ to a sufficiently small open disk around $\bp$ such that
\eqref{18} yields for all $n$ that $\fn^{[n]}(x)\in V$ for $x\in U$.
Of course, this is a classical argument which was used for the
convergence proof of Picard iteration, see Example \ref{ODE}.
The error estimate \eqref{18} is also classical in the context of
Picard iteration, see the last formula in
\cite[Ch.~1, \S3]{CoLe}.
\end{remark}

\section{Convergence criteria}\label{SC:convergence criterion}

In this section, we will develop some criteria for approximation systems, that will guarantee its convergence on a certain domain. To do so, we will first have to explore in what way the derivatives of a function $g$ are determined by an approximation system for this function.

\subsection{Derivatives determined by the approximation system}

\begin{lemma}
\label{polnonnegcoef}
Let $\app^r$ be an AS for $\{\fn_i\}_{i=0}^r$. Then we have for $i<r$ and $n\ge 1$ that:
\begin{equation}
\fn_i^{(n)}(x)=\sum_{k+l<n}
(D_1^kD_2^{l}f_i)(x,\fn_{i+1}(x))\
P_{k,l}^n(\fn_{i+1}'(x),\ldots,\fn_{i+1}^{(n-1)}(x)),
\label{17}
\end{equation}
where $P_{k,l}^n$ is a polynomial with nonnegative coefficients, independent of the specific \allowbreak function~$g_i$.
\end{lemma}

\begin{proof}
This is by induction with respect to $n$,
where the case $n=1$ is:
\[
\fn_i^{(1)}(x)=f_i(x,\fn_{i+1}(x)),
\]
and we further observe that the derivative of a polynomial with non-negative coefficients is again a polynomial with non-negative coefficients.
\end{proof}

As an immediate corollary we obtain:

\begin{lemma} \label{14}
Let $\app^r$ be an AS for $\{g_i\}_{i=0}^r$. Then for $i<r$, the values
$\fn_i'(\bp),\fn_i''(\bp),\ldots,\fn_i^{(n+1)}(\bp)$ are
uniquely determined by the values
$\fn_{i+1}(\bp),\fn_{i+1}'(\bp),\ldots,\fn_{i+1}^{(n)}(\bp)$ through Equation \eqref{17}.
\end{lemma}

What we meant here with {\em uniquely determined}, is that if $\app^r$ is also an AS for another sequence $\{h_i\}_{i=0}^r$, and we have $h_{i+1}^{(j)}(u)=g_{i+1}^{(j)}(u)$ for $j=0,\ldots,n$, then it follows that $h_{i}^{(j)}(u)=g_{i}^{(j)}(u)$ for $j=1,\ldots,n+1$.

\begin{proposition} \label{15}
Let $\app^r$ be an AS for $\{g_i\}_{i=0}^r$. Then for $i+j<r+1$, the values $\fn_i^{(j)}(\bp)$ are uniquely determined in terms of $\app^r$. In case $r=\infty$, we have that the complete functions $g_i$, and hence $g$, are all uniquely determined by $\app^r$.
\end{proposition}
\begin{proof}
Let $i$ and $j$ be such that $i+j<r+1$. By Lemma \ref{14} we see
that $\fn_i(\bp),\fn_i'(\bp),\allowbreak\ldots,\fn_i^{(j)}(\bp)$ are uniquely
determined by $\cf_i,\fn_{i+1}(\bp),\fn_{i+1}'(\bp),\ldots,\fn_{i+1}^{(j-1)}(\bp)$. By further iterations, these are seen to be uniquely determined by $\cf_i,\cf_{i+1},\ldots,\cf_{i+j-1},\fn_{i+j}(\bp)$, hence ultimately by
$\cf_i,\ldots,\cf_{i+j}$. For $r=\infty$, the statement follows from the fact that holomorphic functions are determined by all derivatives in a point.
\end{proof}

\begin{remark}\label{RM dependency}
When we say that a certain value or function is uniquely determined by an AS $\app^r$, this means in general that there is both a dependency on the functions $f_i$ and the coefficients $a_i$. In situations in which the $f_i$ are assumed to be fixed, these statements thus reduce to the unique determinacy in terms of the coefficients $a_i$ alone.
\end{remark}

\begin{corollary}\label{11}
Let us have a PAS $\app^r$
as in Definition \ref{3}. Then, for $n<r+1$,
\begin{equation}
\fn^{[n](j)}(\bp)=\fn^{(j)}(\bp)\qquad(j\le n).
\label{24}
\end{equation}
\end{corollary}

\begin{proof}
By Remark \ref{RM A^n AS for g^[n]}, the truncated AS $\app^n$ is an AS both for
$\{\fn_i\}_{i=0}^n$ and for $\{\fn_i^{[n]}\}_{i=0}^n$.
Then the result follows from Proposition \ref{15}.
\end{proof}

\begin{remark}
\label{37}
If $\app^r$ is a PAS for the
sequence of functions $\fn_i$, such that $\bp=0$,
$\cf_i=0$, $f_i(x,y)$ is even in $x$ and in $y$, and $\fn_i$ is odd,
then we can do better and we conclude that for $n<\thalf r+1$
\begin{equation}
\fn^{[n](j)}(0)=\fn^{(j)}(0)\qquad(j\le 2n).
\label{36}
\end{equation}
Indeed, it then follows by induction from \eqref{Approximation}
that all $\fn_i^{[n]}$ are odd. Next, by the proof of Proposition \ref{15}
(where the dependence on the $\cf_i$ now can be omitted since these are 0)
we have for $i+j<r+1$ and $j$ odd that
$\fn_i'(0),\fn_i^{(3)}(0),\ldots,\fn_i^{(j)}(0)$ are uniquely determined
by $\fn_{i+1}'(0),\fn_{i+1}^{(3)}(0),\ldots,\fn_{i+1}^{(j-2)}(0)$, and hence
by iteration by $\fn_{i+\frac12(j-1)}'(0)$, and hence uniquely determined by
the $f_i$. A similar reasoning applies to the $\fn_i^{[n]}$ for $j\le 2n-2i-1$.
Thus \eqref{36} follows.

Similarly, if in the PAS
$\app^r$ for the sequence of
functions $\fn_i$ we have that $\bp=0$,
$f_i(x,y)$ is odd in $x$, and $\fn_i$ is even,
then we can again conclude that for $n<\thalf r+\thalf$
\eqref{36} holds (for $j\le 2n+1$).
Indeed, it then follows by induction from \eqref{Approximation}
that all $\fn_i^{[n]}$ are even. Next, by the proof of Proposition \ref{15}
we have for $i+j<r+1$ and $j$ even that
$\fn_i(0),\fn_i^{(2)}(0),\ldots,\fn_i^{(j)}(0)$ are uniquely determined
by $\cf_i,\fn_{i+1}(0),\fn_{i+1}^{(2)}(0),\ldots,\fn_{i+1}^{(j-2)}(0)$, and hence
by iteration by $\cf_i,\cf_{i+1},\ldots,\cf_{i+\frac12 j}$. A similar
reasoning applies to the $\fn_i^{[n]}$ for $j\le 2n-2i$.
Thus the claim follows.

\end{remark}

\subsection{Domination of approximation systems}

For power series, a well-known criterion for convergence is that if the coefficients of a power series are bounded in absolute value by a second converging series, then the first series will converge as well. In the following, we will generalize this idea of a so-called {\em dominating} power series to the case of approximations systems.

\begin{definition}
An AS $\app^r$
is called {\em non-negative} if
\[
\cf_i\ge0\quad(i<r+1),\qquad
(D_1^kD_2^lf_i)(\bp,\cf_{i+1})\ge0\quad(i<r,\quad k,l\ge 0).
\]
A non-negative AS $\wt \app^r$
is said to {\em dominate} an AS $\app^r$
if
\[
|\cf_i|\le\wt \cf_i\quad(i<r+1),\qquad
|(D_1^kD_2^lf_i)(\bp,\cf_{i+1})|\le (D_1^kD_2^l\wt f_i)(\bp,\wt \cf_{i+1})\quad
(i<r,\quad k,l\ge 0).
\]
\end{definition}

\begin{proposition}\label{19}
If $\app^r$ is a non-negative AS
for $\{\fn_i\}_{i=0}^r$ then
\begin{equation}
\fn_i^{(j)}(\bp)\ge 0\qquad(i+j<r+1).
\label{21}
\end{equation}
In particular if $r=\infty$, then all $g_i$ have non-negative derivatives.
\end{proposition}

\begin{proof}
Let $i+j<r+1$. From Lemma \ref{polnonnegcoef} (in particular
equation \eqref{17})
and from the assumption that $\app^r$ is non-negative, it follows
that $\fn_i'(\bp),\ldots,\fn_i^{(j)}(\bp)\ge0$ if
$\fn_{i+1}'(\bp),\ldots,\fn_{i+1}^{(j-1)}(\bp)\ge0$, which on its turn holds if $\fn_{i+2}'(\bp),\ldots,\fn_{i+2}^{(j-2)}(\bp)\ge0$, etc. As we finally have $g_{i+j-1}'(\bp)=f_{i+j-1}(\bp,a_{i+j})\ge 0$, it ultimately follows that \eqref{21} holds.
\end{proof}

\begin{lemma} \label{20}
Let $\wt\app^r$ be a non-negative AS for $\{\wt \fn_i\}_{i=0}^r$ which dominates an AS
$\app^r$ for $\{\fn_i\}_{i=0}^r$.
\begin{enumerate}[label=\bfseries\Alph*,leftmargin=0cm,itemindent=0.5cm,labelwidth=\itemindent]
\item
If for certain $n<r+1$ and $m\ge 0$, we have that the inequality:
\begin{equation}\label{EQ inequality dom g}
|\fn_{i}^{(j)}(\bp)|\le \wt \fn_{i}^{(j)}(\bp)
\end{equation}
holds for $i=n$ and all $j\le m$, then this inequality also holds for $i=n-1$ and all $j\le m+1$.
\item Equation \eqref{EQ inequality dom g} holds for all $i$ and $j$ such that $i+j<r+1$.\label{LM inequality dom g B}
\item If Equation \eqref{EQ inequality dom g} holds for $i=n$ and all $j\ge 0$, then it holds for all $i\le n$ and $j\ge 0$. \label{LM inequality dom g C}
\item If moreover $\app^r$ is a PAS for order $n$ and $\wt g_n$ has only non-negative derivatives, then:
\begin{equation}
|\fn^{[n](j)}(\bp)|\le \wt \fn^{(j)}(\bp)\qquad(j\ge0).
\label{23}
\end{equation}\label{LM inequality dom g D}
\end{enumerate}
\end{lemma}

\begin{proof}\ \\
\vspace{-0.5cm}

\begin{enumerate}[label=\bfseries\Alph*,leftmargin=0cm,itemindent=0.5cm,labelwidth=\itemindent]
\item\label{LM dominated AS 1}
It follows by \eqref{17} that for $j\le j_0+1$:
\begin{align*}
\big|\fn_{i_0-1}^{(j)}(\bp)\big|&\le
\sum_{k+l<j}
\big|(D_1^k D_2^{l}f_{i_0-1})(\bp,\cf_{i_0})\big|\,
P_{k,l}^j\big(|\fn_{i_0}'(\bp)|,\ldots,|\fn_{i_0}^{(j-1)}(\bp)|\big)\\
&\le
\sum_{k+l<j}
\big(D_1^kD_2^{l}\wt f_{i_0-1}\,\big)(\bp,\,\wt \cf_{i_0})\,
P_{k,l}^j\big(\wt {g\,}_{\!i_0}'(\bp),\ldots,\wt \fn_{i_0}^{\,(j-1)}(\bp)\big)\\
&=\wt{g\,}_{\!i_0-1}^{(j)}(\bp).
\end{align*}
\item
Let $r_0$ be any finite number such that $r_0<r+1$. Then putting $i_0=r_0$ and $j_0=0$, iterated application of part \ref{LM dominated AS 1} proves the statement for all pairs $i$ and $j$ such that $i+j\le r_0$. So if $r$ is finite itself, the statement is proven by letting $r_0=r$. For infinite $r$ the statement follows as we can then choose $r_0$ arbitrarily large.
\item
From part \ref{LM dominated AS 1} it directly follows that if \eqref{EQ inequality dom g} holds for $i=n$ and $j\ge 0$, then it also holds for $i=n-1$ and $j\ge 0$. Hence by iteration, the statement will follow for all $i\le n$.
\item
First we have that
$|\fn_n^{[n]}(\bp)|=|\cf_n|\le\wt \cf_n=\wt \fn_n(\bp)$.
Furthermore, for $j>0$ we have
$|\fn_n^{[n](j)}(\bp)|=0\le\wt \fn_n^{(j)}(\bp)$.
Now \eqref{23} follows by applying \ref{LM inequality dom g C} to the truncated AS $\app^n$ for the sequence $\{g_i^{[n]}\}_{i=0}^n$.\qedhere
\end{enumerate}
\end{proof}

\begin{remark}
Note that by Proposition \ref{19}, the condition in part \ref{LM inequality dom g D} that $\wt g_n$ has non-negative derivatives is automatically satisfied when $r=\infty$.
\end{remark}

\begin{theorem}\label{TH:DomImpConv}
Let $\wt\app^\infty$ be a non-negative
AS for $\{\wt \fn_i\}_{i=0}^\iy$ which dominates a PAS
$\app^\infty$ for $\{\fn_i\}_{i=0}^\iy$.
If $B(u,R)\subset U$ for some $R>0$, then $g^{[n]}$ converges uniformly to $g$ on $B(u,R)$.
\end{theorem}
\begin{proof}
For $|x-\bp|<R$ we have
\begin{multline*}
|\fn(x)-\fn^{[n]}(x)|
=\Big|\sum_{j=0}^\iy\frac{\fn^{(j)}(\bp)-\fn^{[n](j)}(\bp)}{j!}\,
(x-\bp)^j\Big|\\
=\Big|\sum_{j={n+1}}^\iy\frac{\fn^{(j)}(\bp)-\fn^{[n](j)}(\bp)}{j!}\,
(x-\bp)^j\Big|
\le\sum_{j={n+1}}^\iy\frac{2\,\wt \fn^{(j)}(\bp)}{j!}\,|x-u|^j,
\end{multline*}
where we have used Corollary \ref{11} and Lemma \ref{20} \ref{LM inequality dom g B} and \ref{LM inequality dom g D}.
Now use that the radius of convergence of the power series of $\wt \fn(x)$ around $\bp$ is at least $R$.
\end{proof}

\begin{theorem} \label{CorCritApp}
Let $\app^\infty$ be a non-negative AS
for $\{\fn_i\}_{i=0}^\iy$ such that for some $R>0$ we have $U=B(\bp,R)$ and
$B(\cf_{i},\wt R_i)\subset V_{i-1}$, where $\wt R_i = \lim_{r\to R}\;g_i(u+r)-a_i$.
Then
$\app^\infty$ is also a PAS for
$\{\fn_i\}_{i=0}^\iy$, and $\fn^{[n]}$ converges uniformly to $\fn$ on $U$.
\end{theorem}

\begin{proof}
We first prove that $\app^\infty$ is a PAS for
$\{\fn_i\}_{i=0}^\iy$. Fix $n\ge 0$. We will show by downward induction
with respect to $i$, starting at $i=n$, that
$\fn_i^{[n]}(x)\le \fn_i(x)$ if $0\le x-\bp < R$ and
$|\fn_i^{[n]}(x)-\cf_i|\le \fn_{i}(\bp+r)-\cf_{i}$ if $|x-\bp| < R$, where the last inequality is strict when $\wt R>0$. From this it then follows that $g_i^{[n]}(U)\subset V_{i-1}$ for $i=1,\ldots,n$. Note that if $\wt R_i = 0$, then we still have that $g_i^{[n]}(U)=\{a_i\}\subset V_{i-1}$.

For $i=n$ we have $\fn_n^{[n]}(x)=\cf_n$ and the assertions are clear.
Suppose the assertions hold for $i$ replaced by $i+1$. Then
\[
\fn_i^{[n]}(x) =\cf_i + \int_{\bp}^x f_i\big(t,\fn_{i+1}^{[n]}(t)\big)\,dt.
\]
Hence, for $0\le x-\bp < R$ we have
\[
\fn_i^{[n]}(x)\le \cf_i + \int_{\bp}^x f_i\big(t,\fn_{i+1}(t)\big)\,dt=\fn_i(x).
\]
Also, for $|x-u|< R$ we have
\[
|\fn_i^{[n]}(\bp+x)-\cf_i|\le \lim_{r\to R}\fn_i^{[n]}(\bp+r)-\cf_i\
\le \lim_{r\to R}\fn_i(\bp+r)-\cf_i = \wt R_i,
\]
where the inequality is strict if $\wt R_i\neq 0$, for in that case $g_i$ cannot be constant $\cf_i$, so that $g_i$ is a strictly increasing function on $[u,u+R)$.

By Theorem \ref{TH:DomImpConv} and the observation that $\app^\infty$ is a non-negative AS which of course dominates itself, it follows that $\fn^{[n]}$ converges uniformly to $\fn$ on $U$.
\end{proof}

\begin{remark}
\label{38}
It is possible to apply Theorem \ref{CorCritApp} to a specific
class of FDE's from Example~\ref{FunDifEq}. With the notation and
conventions
from that example, assume that $\cf\geq 0$, $\phi(\bp)=\bp$,
and that $\phi$ and $f$ have non-negative derivatives
(the function $f$ in both its arguments) in respectively $\bp$ and
$(\cf,\bp)$. Then, it follows by the composition and product formulas
for power series, that
$f_i(x,y)=\big(\phi^{\circ\,i}\big)'(x)\,f(\phi^{\circ\,i},y(x))$ also
has non-negative derivatives
in $(\cf,\bp)$. Hence the AS associated to our FDE is a non-negative AS.
By Proposition \ref{19} it then follows that
the solution $\fn$ of \eqref{GeneralFDE} has non-negative derivatives in $\bp$.
Now let $U=B(\bp,R)$ and $V_i = B(\cf,\wt R_i)$, where $\wt R_i = \lim_{r\to R}\;g_i(u+r)-a_i = \lim_{r\to R}\;\fn\circ\phi^{i}(\bp+r)-a$ and $R>0$. Further assume that $f$, $D_2 f$ and $\phi'$ are all bounded on their domain.

The requirements for
Theorem \ref{CorCritApp} are now
satisfied, and so the AS associated to our FDE is a PAS of all
orders, whose approximations converge to $\fn$.
Therefore, application of Corollary \ref{CR:Starlike}\ref{CR Starlike B}
yields
\begin{equation*}\label{ErrEstFDE}
|\fn(x)-\fn^{[n]}(x)|\leq
\|\phi'\|_U\,\|(\phi^{\circ 2})'\|_U\cdots \|(\phi^{\circ n})'\|_U\;
\|D_2 f\|_{Y_0}\cdots \|D_2 f\|_{Y_{n-1}}\;
\|f\|_{Y_n}\,\frac{R^{n+1}}{(n+1)!}\,.
\end{equation*}
Since $\phi$ is an endomorphism on $U$ we have
\[
(\phi^{\circ i})'(x)=
\phi'(\phi^{\circ i-1}(x))\phi'(\phi^{\circ i-2}(x))\cdots \phi'(x)
\leq \|\phi'\|_U^i \quad (x\in U).
\]
This implies the more simple estimates
\begin{align}\label{SimErrEstFDE}
&|\fn(x)-\fn^{[n]}(x)|
\leq \big\|\phi'\big\|_U^{\frac{1}{2}n(n+1)}\;
\|D_2 f\|_{Y_0}\cdots \|D_2 f\|_{Y_{n-1}}\;
\|f\|_{Y_n}\,\frac{R^{n+1}}{(n+1)!}\\
&\qquad\leq \big\|\phi'\big\|_{U}^{\frac{1}{2}n(n+1)}
\big(\|D_2 f\|_{Y_0}\big)^n\,
\|f\|_{Y_0}\;\frac{R^{n+1}}{(n+1)!}\,,
\label{39}
\end{align}
where we recall that $U=B(u,R)$ and $Y_0=B(u,R)\times B(a,\wt R_0)$, where $\wt R_0 = \lim_{r\to R}\; g(u+r)-a$. We will use \eqref{SimErrEstFDE} and the more rough estimate \eqref{39} a couple of times in the next section.
\end{remark}

\section{Further examples}\label{SC:examples}
In some of the examples below
we will use the {\em Chebyshev polynomials} $T_p$ of the
first kind and $U_p$ of the second kind
(see \cite[\S10.11, (2), (22), (23)]{Er}):
\begin{align*}
T_p(\cos\tha)&\defeq\cos(p\tha),\qquad\quad\;\;
T_p(x)=\thalf p \sum_{k=0}^{\lfloor p/2\rfloor}
\frac{(-1)^k(p-k-1)!}{k! (p-2k)!}\,(2x)^{p-2k},
\\
U_p(\cos\tha)&\defeq\frac{\sin((p+1)\tha)}{\sin\tha}\,,\quad
U_p(x)=\sum_{k=0}^{\lfloor p/2\rfloor}
\frac{(-1)^k(p-k)!}{k! (p-2k)!}\,(2x)^{p-2k}.
\end{align*}
We will also use the polynomial $T_p^+$, which is defined as:
\begin{equation*}
T_p^+(x)\defeq i^{-p}\,T_p(ix),\qquad
T_p^+(x)=\thalf p\sum_{k=0}^{\lfloor p/2\rfloor}
\frac{(p-k-1)!}{k! (p-2k)!}\,(2x)^{p-2k}.
\label{28}
\end{equation*}
We deduce that
\begin{equation*}
T_p^+(\sinh x) = T_p^+(i^{-1}\sin(ix))=i^{-p} T_p(\sin(ix))=
i^{-p} T_p(\cos(\thalf\pi-ix))=i^{-p}\cos(\thalf p\pi-ipx),
\end{equation*}
which equals $\cos(ipx)$ for $p$ even and
$i^{-1}\sin(ipx)$ for $p$ odd. Hence
\begin{equation*}
T_p^+(\sinh x)=\begin{cases}
\cosh(px)&\mbox{($p$ even),}\\
\sinh(px)&\mbox{($p$ odd).}
\end{cases}
\label{29}
\end{equation*}

We start with some examples which are special cases of an AS
determined by a FDE, as has been described in Example \ref{FunDifEq}.
The underlying FDE's with their solutions are
\begin{align}
\fn'(x) &= \big(\fn(x/p)\big)^p, & &\fn(0)=1,\qquad
\fn(x)=e^x,\nonumber\\
\fn'(x) &= T^+_p\big(\fn(x/p)\big), & &\fn(0)=0,\qquad
\fn(x)=\sinh x&\mbox{($p$ even),}\label{FDESinh}\\
\fn'(x) &= (-1)^{p/2}\,T_p\big(\fn(x/p)\big), & &\fn(0)=0,\qquad
\fn(x)=\sin x&\mbox{($p$ even),}\label{FDESin}\\
\fn'(x) &= \sinh(x/p)\,U_{p-1}\big(\fn(x/p)\big), & &\fn(0)=1,\qquad
\fn(x)=\cosh x,\nonumber\\
\fn'(x) &= -\sin(x/p)\,U_{p-1}\big(\fn(x/p)\big), & &\fn(0)=1,\qquad
\fn(x)=\cos x.\nonumber
\end{align}
Note that in \eqref{FDESinh} and \eqref{FDESin} with $p$ odd, the only
holomorphic solution $\fn$ of the FDE with $\fn(0)=0$ would be the function
identically zero.

\begin{example}\label{ExamExp}
In this example we will prove the convergence of the AS which was concisely expressed as \eqref{EQ:NFEexpX}. So, more explicitly, let $p\in \Z_{\geq 1}$\,, $R>0$, and consider the AS
obtained by the choices $f(x,y)\defeq y^p$, $\phi(x)\defeq x/p$, $\bp\defeq0$,
$\fn(x)\defeq e^x$, $U\defeq B(0,R)$,
$V_i\defeq B(1,\exp(R/p^{i+1})-1)\subset B(0,\exp(R/p^{i+1}))$
in Example \ref{FunDifEq}. Note that $\phi$ leaves the base point 0
invariant. Thus $\cf_i=\cf=\fn(0)=1$ for all $i$.
Furthermore,
\[
\fn_i(x) = \exp(x/p^i)\quad(x\in U),\qquad
f_i(x,y)=p^{-i}y^p\quad((x,y)\in U\times V_i).
\]
By induction we see that $\fn_{n-k}^{[n]}$ is a polynomial of degree
$(p^k-1)/(p-1)$. Hence $\fn^{[n]}$ is a polynomial of degree
$(p^n-1)/(p-1)$.

The AS satisfies the conditions of Remark \ref{38}.
Hence, it is proper and
by \eqref{SimErrEstFDE} we get for $|x|<R$ the estimate
\begin{align*}
|\fn(x)-\fn^{[n]}(x)| &\leq \big\|\phi'\big\|_U^{\frac{1}{2}n(n+1)}\;
\|D_2 f\|_{Y_0}\cdots \|D_2 f\|_{Y_{n-1}}\;
\|f\|_{Y_n}\,\frac{R^{n+1}}{(n+1)!}\\
&\leq p^{-\frac{1}{2}n(n+1)} \exp(R/p^{n}) \frac{R^{n+1}}{(n+1)!} \prod_{j=0}^{n-1}p \exp((p-1) R/p^{j+1})\\
&= \frac{e^R R^{n+1}}{p^{\frac{1}{2}n(n-1)}\,(n+1)!}\,,
\end{align*}
which converges to $0$ as $n$ goes to infinity.


For $p=2$, the first
four approximations are given by
\begin{align*}
\fn^{[0]}(x)&=1,\qquad
\fn^{[1]}(x)=1+x,\qquad
\fn^{[2]}(x)=1+x+\frac{x^2}{2}+\frac{x^3}{12}\,,\\
\fn^{[3]}(x)&=1+x+\frac{x^2}{2}+\frac{x^3}{6}+\frac{7 x^4}{192}
+\frac{x^5}{192}+\frac{x^6}{2304}+ \frac{x^7}{64512}\,.
\end{align*}
Note that formula \eqref{24} is indeed satisfied: the power series of
$\fn^{[n]}(x)$ and $\fn(x)=e^x$ coincide up to the term with $x^n$.
\end{example}

\begin{remark}

The approximations provided by the above AS converge fast as measured by the degree of $n$. For $p=2$ for instance, substituting $x=1$ in the 8th order approximation yields $\fn^{[8]}(1) = 2.71828182845902\ldots$ as approximation of $e$, which has 13 correct decimals. For the 8th order Taylor series on the other hand (corresponding to $p=1$), we get $\fn^{[8]}(1) = 2.71827\ldots$, which yields 4 correct decimals. This difference can be explained by the extra factor $p^{-\frac{1}{2}n(n-1)}$ in the error estimate, which for $p\geq 2$ will be more dominant than the factorial. Hence, for a given value of $x$ and $p\geq 2$, one may expect that the number of correct digits of $g^{[n]}(x)$ is roughly proportional to $n^2$ rather than $n\log n$. Of course, such a comparison does not take into account that there is also more calculation involved for higher values of $p$, especially if one would carry them out symbolically (see also Section \ref{numImpl}).
\end{remark}

\begin{remark}
From the above error estimate, one may also notice that we get a convergent approximation if we let the degree $n$ stay a fixed number greater or equal to $2$, and instead let $p$ approach infinity. In the particular case of $n=2$ for example, we obtain that:
$$e^x = \lim_{p\to \infty}1+\int_0^x \left(1+\int_0^{x_1}\frac{1}{p}\,dx_2\right)^p dx_1 = \lim_{p\to \infty}1+\int_0^x \left(1+\frac{x_1}{p}\right)^p dx_1.$$
Deriving the first and third expression of these three, noting that in this case the order of taking the limit or the derivate can be interchanged, we obtain the well-known formula:
$$e^x = \lim_{p\to \infty}\left(1+\frac{x}{p}\right)^p.$$
\end{remark}

\begin{example}\label{ExamSinh}
Let $p\in 2\Z_{\geq 1}$, and consider the AS
obtained by the choices $f(x,y)\defeq T_p^+(y)$, $\phi(x)\defeq x/p$, $\bp\defeq0$,
$\fn(x)\defeq\sinh x$, $U\defeq B(0,R)$,
$V_i\defeq B(0,\sinh(R/p^{i+1}))$
in Example \ref{FunDifEq}. Note that $\phi$ leaves the base point 0
invariant, so $\cf_i=0$ for all $i$.
Furthermore,
\[
\fn_i(x) = \sinh(x/p^i)\quad(x\in U),\qquad
f_i(x,y)=p^{-i}T_p^+(y)\quad((x,y)\in U\times V_i).
\]

The AS satisfies the conditions of Remark \ref{38}.
Hence, it is proper and
by \eqref{SimErrEstFDE} we get for $|x|<R$ the estimate

\begin{align*}
|\fn(x)-\fn^{[n]}(x)|&\leq
\big\|\phi'\big\|_U^{\frac{1}{2}n(n+1)}\;
\|D_2 f\|_{Y_0}\cdots \|D_2 f\|_{Y_{n-1}}\;
\|f\|_{Y_n}\,\frac{R^{n+1}}{(n+1)!}\\
&\leq p^{-\frac{1}{2}n(n+1)}
\frac{R^{n+1}}{(n+1)!}\;T_p^+(\,\sinh(R/p^{n+1}))\;
\prod_{j=0}^{n-1}{T_p^+}'(\,\sinh(R/p^{j+1}))\\
\noalign{\allowbreak}
&= p^{-\frac{1}{2}n(n+1)} \frac{R^{n+1}}{(n+1)!}\;\cosh(R/p^{n})\;
\prod_{ j=0 }^{n-1}\frac{p\,\sinh(R/p^j)}{\cosh(R/p^{j+1})}\\
\noalign{\allowbreak}
&= \frac{R^{n+1}\,\sinh R}{p^{\frac12 n(n-1)}\,(n+1)!}\;
\prod_{j=1}^{n-1}\tanh(R/p^j)\\
\noalign{\allowbreak}
&\leq \frac{R^{n+1}\,\sinh R}{p^{n(n-1)}\,(n+1)!}\,,
\end{align*}
where we use in the last inequality that $\tanh t\le t$ for $t\ge 0$. For $p=2$, we obtain the nested formula expression:
\begin{equation}
\sinh(x) = \int_0^x 1+2\Big(\int_0^{x_1} \frac{1}{2}+1\Big(\int_0^{x_2}\frac{1}{4}+\frac{1}{2}\Big(\ldots \Big)^2
dx_3 \Big)^2 dx_2 \Big)^2 dx_1.
\end{equation}
The first four approximations are in that case given by:
\begin{equation*}
\fn^{[0]}(x)=0,\qquad
\fn^{[1]}(x)=x,\qquad
\fn^{[2]}(x)=x+\frac{x^3}{6}\,,\qquad
\fn^{[3]}(x)=x+\frac{x^3}{6}+\frac{x^5}{120}+\frac{x^7}{8064}\,.
\end{equation*}
Here, even better than in Example \ref{ExamExp}, $\fn(x)$ and $\fn^{[n]}$
(which are both odd functions) have power series which agree up to the
(zero) term of degree $2n$. That this holds for all $n\geq 0$ was
explained in
Remark~\ref{37}.
\end{example}

\begin{remark}\label{RemSin}
By applying the transformation
$\psi\colon U \to U,\ {\psi(x) = ix}$ on the set $U = B(0,R)$
of the previous example,
we obtain by Proposition \ref{Tran}
and Lemma \ref{TranLem}
a new PAS with respect to the function sequence
$\hat \fn_k(x)=\fn_k\circ \cotr(x)=\sinh(ix/p^k)=i\sin(x/p^k)$,
where $\hat f_k(y,x)=\cotr'(x)\,f_k(y,\cotr(x))=ip^{-k}T^+_p(y)$.
Next by applying Proposition \ref{LinTran} with $a_i=-i$ and $b_i=0$,
we get a PAS with respect to $\check \fn_k(x)=\sin(x/p^k)$,
where $\check f_k(y,x)=p^{-k}\,T^+_p(iy)=(-1)^{p/2}\,p^{-k}\,T_p(y)$
($p$~is still assumed to be even).
The error estimate of the induced approximations can again be given
by Corollary \ref{CR:Starlike}.
It turns out to be equal to the estimate given in Example \ref{ExamSinh}.
\end{remark}

Next, we will see an example of an approximation system for which the functions $f_i(x,y)$ depend on both $x$ and $y$, rather than on $y$ alone.

\begin{example}\label{ExamCosh}
Consider the AS
obtained by the choices
$f(x,y)\defeq\sinh(x/p)\,U_{p-1}(y)\ (p\geq 2)$, $\phi(x)\defeq x/p$, $\bp\defeq0$,
$\fn(x)\defeq\cosh x$, $U\defeq B(0,R)$,
$V_j\defeq B(1,\cosh(R/p^{j+1})-1)$
in Example~\ref{FunDifEq}. As before, $\phi$ leaves the base point 0
invariant, so $\cf_j=1$ for all $j$.
The AS is a non-negative system (use \cite[10.11(27), 10.9(3)]{Er}) and it
satisfies the conditions of Remark \ref{38}.
Hence by \eqref{39} we have the estimate
\begin{equation}
|\fn(x)-\fn^{[n]}(x)| \leq p^{-\frac{1}{2}n(n+1)}\;
\big(\|D_2 f\|_Y\big)^n\;
\|f\|_Y\,\frac{R^{n+1}}{(n+1)!}\qquad(x\in B(0,R)),
\end{equation}
where $Y={B(0,R)\times B(1,\cosh(R/p-1))}$. Again by the positivity of the system we have
\begin{align*}
\|f\|_Y&=f(R,\cosh(R/p))\qquad =
\sinh(R/p)\,U_{p-1}(\cosh(R/p))=\sinh R,\\
\|D_2f\|_Y&=(D_2f)(\cosh(R/p),R)=
\sinh(R/p)\,U_{p-1}'(\cosh(R/p)).
\end{align*}
Also observe that
\[
\sinh t\;U_{p-1}'(\cosh t)=
\frac d{dt}\left(\frac{\sinh(pt)}{\sinh t}\right)
=\frac{\sinh(pt)}{\sinh t}\,(p\coth(pt)-\coth t)
<\frac{\sinh(pt)}{\sinh t}\,(p-1)\quad(t>0),
\]
since an elementary analysis yields that $p\coth(pt)-\coth t$ increases
from 0 to $p-1$ as $t$ runs from $0$ to $\iy$.
Hence
\[
\sinh(R/p)\,U_{p-1}'(\cosh(R/p))<
(p-1)\,\frac{\sinh R}{\sinh(R/p)}\,.
\]
Altogether,

\[
|\fn(x)-\fn^{[n]}(x)|<
\frac{R\sinh R}{p^{\frac12 n(n+1)}(n+1)!}\,
\left(\frac{(p-1)R\sinh R}{\sinh(R/p)}\right)^n
\qquad(x\in B(0,R)),
\]
which converges to $0$ as $n$ goes to infinity.
\end{example}

\begin{remark}\label{RemCos}
Somewhat similarly as in Remark \ref{RemSin}
we can apply a coordinate transformation
$\cotr\colon U \to U, x\mapsto ix$ to the set $U = B(0,R)$ of
the previous example.
Then we obtain by Proposition \ref{Tran}
and Lemma \ref{TranLem}
a new PAS with respect to the function sequence
$\hat \fn_j(x)=\fn_j\circ \cotr(x)=\cos(x/p^j)$,
where $\hat f_j(y,x)=\cotr'(x)\,f_j(y,\cotr(x))=-p^{-j}\sin(x/p^{j+1})
U_{p-1}(y)$.
Again, the error estimate of the induced approximations is the same
as the estimate given in Example \ref{ExamCosh}.
\end{remark}

\begin{example}\label{EX overconvergence}
Let $p\in\Z_{\geq 2}$, $R\in(0,1)$ and put  $\lambda_{p,i}\defeq(p^{i}+p-2)/(p^{i+1}-p^{i})$. Now consider an AS obtained by $f_i(x,y) \defeq \lambda_{p,i}\, y^p$, $u_i=0$, $\fn_i(x) \defeq (1-x)^{-\lambda_{p,i}}$, $U\defeq D(0,R)$ and $V_i\defeq B(1,(1-R)^{-\lambda_{p,i+1}}-1)$. Then we have a non-negative AS $\app^\infty$ with
$\cf_i = 1$. Application of Theorem \ref{CorCritApp} yields after some
computation that $\app^\infty$ is a PAS.
So by Corollary \ref{CR:Starlike}.{\bf B} we have for $x\in U$ that
\begin{align*}
|\fn(x)-\fn^{[n]}(x)| & \leq \|D_2 f_0\|_{Y_0}\cdots \|D_2 f_{n-1}\|_{Y_{n-1}}\;
\|f_{n}\|_{Y_{n}}\ \frac{R^{n+1}}{(n+1)!}\\
&\leq \lambda_{p,n} \left((1-R)^{-\lambda_{p,n}}\right)^{p}
\,\frac{R^{n+1}}{(n+1)!}\, \prod_{i=0}^{n-1}\ p\,\lambda_{p,i}
\left((1-R)^{-\lambda_{p,i}}\right)^{p-1}\\
&\leq \,
\frac{2^n R^{n+1}}{(1-R)^{2n}(n+1)!}\,,
\end{align*}
where we used the rough estimation that $\lambda_{p,i}\leq 1$ and $\lambda_{p,i}\leq 2/p$ for $i\ge 1$. This expression goes to $0$ as $n$ goes to infinity. As $R\in (0,1)$, This means that the approximation system converges at least on the open unit disk $B(0,1)$. Letting $p=3$, we have for instance for $|x|<1$:
\begin{equation*}
\frac{1}{1-x} =1+\int_0^x  \Bigl(1+\frac{2}{3}\int_{0}^{x_1} \Bigl(1+\frac{5}{9}\int_{0}^{x_2} \Bigl(1+\frac{14}{27}\int_{0}^{x_3} \Bigl(1+\ldots\Bigr)^3dx_4\Bigr)^3
dx_3 \Bigr)^{3}dx_2 \Bigr)^3 dx_1,
\end{equation*}
where we have moved the factors $\lambda_{p,i}$ in front of the integrals. In this case, the first four approximations are given by
\begin{align*}
\fn^{[0]}(x)&=1,\qquad
\fn^{[1]}(x)=1+x,\qquad
\fn^{[2]}(x)=1+x+x^2+\frac{4 x^3}{9}+\frac{2 x^4}{27},\\
\fn^{[3]}(x)&=1+x+x^2+x^3+\frac{127 x^4}{162}+\frac{751 x^5}{1458}+\frac{1850 x^6}{6561}+\frac{2500 x^7}{19683} + \ldots + \frac{1953125 x^{13}}{1087876733112}\,.
\end{align*}
\end{example}

\begin{remark}\label{RM domain of conv}
In all previous examples of this section, the domain of convergence equalled the whole complex plane, as the $R$ in the expression of the error estimation could be chosen arbitrarily large. This is no longer true for Example \ref{EX overconvergence}, as in its expression of the error estimation we must have $R<1$. This should not come as a surprise, as the original function has a singularity at $x=1$, which prohibits $U$ from containing a disk $D(0,R')$ with $R'>1$. However, since we are not dealing with Taylor approximations here, this does not mean that the actual domain of convergence must equal the open unit disk. What is interesting about Example \ref{EX overconvergence}, is that here the domain of convergence indeed seems to have a non-circular shape. The question what the true domain of convergence looks like, seems not so easy to answer, and it will be further discussed in Section \ref{SC directions}.
\end{remark}

\section{Numerical Implementation}\label{numImpl}

In all examples in the previous section, it was possible to calculate the
approximations provided by the algorithm explicitly. However, in many cases the
approximation $\fn_i^{[n]}$ is a polynomial whose degree is exponential
in the number $n-i$. Because the computation time of
each new step depends at least linearly on the input size of
$\fn_i^{[n]}$, also the computational effort tends to increase
exponentially as the order of the approximation increases.
For practical purposes, this can become problematic when $n$ gets large,
even when making use of computer software for symbolic computation.

It is therefore worth noting that the algorithm suits itself quite naturally for numerical implementation.
This can for instance be achieved as follows: suppose we want a numerical approximation
$\bar{\fn}^{[n]}$ of $\fn^{[n]}$ (which in its turn is an approximation of $\fn$)
along a path $\gamma$, going from $\bp$ to $x\in U$. The first step is to
partition $\gamma$ into a tuple
$\bar{\gamma}:\{0,1,\ldots,N\}\to \mathrm{Im}\ \gamma$,
consisting of only $N+1$ points from the original path $\gamma$.
Let\ $\Delta_k\defeq\bar\gamma(k+1)-\bar\gamma(k)$\ for $k=1,\ldots,N$.
Now we construct $\bar{\fn}_i^{[n]}:\{0,\ldots,N\}\to \C$ as follows:
\begin{align*}
\bar{\fn}_n^{[n]}(k) &= \cf_n &(k = 0,\ldots,N),\\
\bar{\fn}_i^{[n]}(0) &= \cf_i &(i = n-1,\ldots,0),\\
\bar{\fn}_i^{[n]}(k) &= \bar{\fn}_i^{[n]}(k-1)+f_i(\bar\gamma(k-1),\bar{\fn}_{i+1}^{[n]}(k-1)) \Delta_{k-1} &(k = 1,\ldots,N,\ \ i = n-1,\ldots,0).
\end{align*}
The vector $\bar{\fn}^{[n]}\defeq\bar{\fn}_0^{[n]}$
then represents a function with value
$\bar{\fn}^{[n]}(k)$ at the point $\bar{\gamma}(k)$.
This approximation of $\fn^{[n]}$ will become better
as the partition $\bar\gamma$ of $\gamma$ becomes finer.

Unlike exact computation of $\fn^{[n]}$,
computation of $\bar{\fn}^{[n]}$ carried out up to
a certain numerical precision does not get significantly more complicated
with each increment in $n$. Hence, provided that the coefficients $\cf_i$
are already given or computed, and that the functions $f_i$ to be
applied do not substantially increase in computational complexity, the
computational effort of this numerical implementation tends to be
linear in both the depth $n$ and the partition fineness $N$. However, one should keep in mind that unlike for symbolic evaluation, to get an increasingly accurate approximation of
a target function $\fn$ using the numerical algorithm, it requires
besides an increment in $n$ also a simultaneous (and not necessarily proportional) increment in $N$.

The need for numerical implementation also comes up
in situations where $\fn^{[n]}$ cannot even be expressed in terms of known
mathematical functions (such as we will encounter in the next section). Then, numerical calculation seems the only option.

\section{Directions for further research}\label{SC directions}

\subsection{The domain of convergence in Example \ref{EX overconvergence}}

As we noted in Remark \ref{RM domain of conv}, the fact that the domain of convergence in Example \ref{EX overconvergence} contains the unit disk, does not imply that it will be equal to it. Numerical simulations seem to indicate that indeed it is not, meaning that the actual domain of convergence (which we shall call $U_p^C$) can become considerably bigger than the open unit disk, although $x=1$ remains a boundary point of each $U_p^C$. This situation can be likened to the convergence behaviour in \cite{Ka}, where moreover the approximated function is the same. This phenomenon is known as {\em overconvergence}, and is an active field of research (cf. e.g. \cite{Ga,Wa}). The subscript $p$ in $U_p^C$ is necessary, as the set $U_p^C$ may differ for different values of the parameter $p$. Figure \ref{FI U_p} gives an impression of the extend of $U_p^C$, where $p$ ranges from $2$ to $6$. These estimations of the contours of $U_p^C$ have been obtained with the numerical algorithm from Section \ref{numImpl}, where the paths have been drawn radially outward from the base point $u$.

\begin{wrapfigure}{r}{0.5\textwidth}\label{FI U_p}
  \begin{center}
    \includegraphics[width=0.48\textwidth]{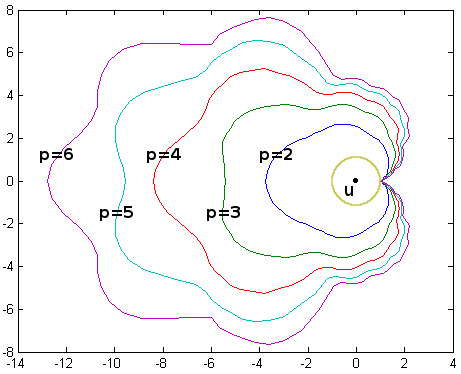}
  \end{center}
  \caption{Domain of convergence $U_p^C$.}
  \vspace{-10pt}
\end{wrapfigure}

We do not have certainty about how accurately our simulations are depicting the true domain of convergence, and a more advanced numerical analysis is certainly desirable. 
Of course, it would be best if we could fully determine $U_p^C$ through theoretical means, but it is questionable whether an explicit characterization of $U_p^C$ is easily obtainable. There are, nevertheless, also other questions by which we could improve our understanding of $U_p^C$. First, if we cannot exactly determine $U_p^C$, perhaps we can at least improve a bit on the result that $B(0,1)\subset U_p^C$. Is there some bigger region for which we can show that it is included in $U_p^C$? For which points we can show that they do not lie in $U_p^C$? Further, what can we say about the border of $U_p^C$? Is it smooth? Could it perhaps exhibit some kind of self-similarity? In this regard, it may be worth noting that if we would omit the integrations, i.e. if we would replace \eqref{Approximation} by $\fn_i^{[n]}(x) \defeq\cf_i + f_i\big(t,\fn_{i+1}^{[n]}(t)\big)$, then for suitable choices of $\cf_i$ and $f_i$, we have got a known recipe for making fractals.

\subsection{Fractional approximation systems}

So far, we have only considered approximation systems for which the functions $f_i$ and $g_i$ are all holomorphic on their domains, rather than meromorphic.
However, the assumption of holomorphy is just made for convenience.
In fact, the theory can also be developed under
the weaker assumption that $f_i$ and $g_i$ are meromorphic on their domains,
but with $f_i$ being regular in $(\bp,\cf_{i+1})$ and $g_i$ being regular in $\bp$.
Below we will consider so-called fractional approximation systems, which fit in such a wider theory. In the following discussion,
we will freely refer to earlier theorems in the paper, but under the suggestion that these remain valid in this more general setting.

\paragraph{}

Let $\fn$ be a meromorphic function on a region $U$ and holomorphic in the
base point $\bp:=0\in U$, and define sequences $\fn_i$ and $f_i:\C\times \C^*\to \C$ according to the following recursive construction:
$$f_i(x,y) \defeq \frac{x^{m_i}}{y},\qquad \fn_{i+1}\defeq \frac{x^{m_i}}{\fn_i'},\qquad
g_0:=g,$$
where $m_i$ denotes the order of the possible zero of $g_i'$ in the base point $\bp$ (in case $g_i'(u)\neq 0$, we simply have $m_i=0$). Note that even if $\fn$ would be holomorphic on $U$, there is the possibility that some of the $\fn_i$ become meromorphic, as a zero of $\fn_i'$ outside the base point $\bp$, will turn into a pole of $\fn_{i+1}$.
Such zeros might in principle show up arbitrarily close to the base point $\bp$ as $i$ goes to infinity, so restricting $U$ would not resolve this issue. By putting $\cf_i \defeq \fn_i(\bp)$, we see that every function $\fn$ holomorphic in $\bp$ gives rise to a unique AS $\app^\infty$. In this setting which is less restrictive than in Definition \ref{1}, we have that $\app^\infty$ is an AS for $\fn$. We will denote this kind of approximation systems by {\em fractional approximation systems}.

Taking $\fn$ respectively equal to $e^x$,\ $x^a$ and $\tan x$, this gives us in nested formula expression:
\begin{equation}
e^x\quad =\quad
1+\displaystyle\int_0^x\cfrac{dx_1}{
1-\displaystyle\int_0^{x_1}\cfrac{dx_2}{
1+\displaystyle\int_0^{x_2}\cfrac{dx_3}{
1-\displaystyle\int_0^{x_3}\cfrac{\quad\; dx_4\quad\;}{1+\ldots}}}}\label{EQ exp x frac as}
\end{equation}
\begin{equation}
x^a \quad =\quad
1+\displaystyle\int_1^x\cfrac{a\; dx_1}{
1+\displaystyle\int_1^{x_1}\cfrac{(1-a)\; dx_2}{
1+\displaystyle\int_1^{x_2}\cfrac{a\; dx_3}{
1+\displaystyle\int_1^{x_3}\cfrac{\ (1-a)\; dx_4\ }{1+\ldots}
}}}\label{EQ x^a frac as}
\end{equation}
\begin{equation}
\tan\ x\quad =\quad
\qquad \displaystyle\int_0^{x}\cfrac{dx_1}{
1-\displaystyle\int_0^{x_1}\cfrac{2\,x_2\,dx_2}{
1+\displaystyle\int_0^{x_2}\cfrac{4\,x_3\,dx_3}{
3-\displaystyle\int_0^{x_3}\cfrac{\ 28\,x_4\,dx_4\ }{5+\ldots}
}}}\label{EQ tan x frac as}
\end{equation}

For these continued fraction-like expression, we were able to do some simplifications by multiplying the numerators and denominators with appropriate factors (as can be done with normal continued fractions), which is in fact an application of Proposition \ref{LinTran}. In all of the above examples we only see rational coefficients $a_i$. This is to be expected for any function whose Taylor coefficients are rational, as follows implicitly from Lemma \ref{polnonnegcoef}. Note that even for relatively small $n$ (say $n\geq 4$) it already becomes impossible to express $g^{[n]}$ in terms of well-known mathematical functions. From a practical point of view, this means that we are more than ever dependent on numerical methods for evaluation of $g^{[n]}$. From a theoretical point of view however, this does not prohibit us from studying the theoretical convergence behaviour of these AS's.

\paragraph{}

The question now arises whether these fractional AS's also provide accurate approximations of the target function.
In \eqref{EQ exp x frac as} and \eqref{EQ x^a frac as}, the AS repeats itself after two steps (at least after the application of Proposition \ref{LinTran}, mentioned above). When it comes to the question of convergence, this repetitive behaviour could allow for a more conventional approach in which $g$ is viewed as a fixpoint of a certain operator (possibly one involving two or more subsequent integrations). This cannot apply, however, to the more general situation in which the AS is not repetitive, like is the case in \eqref{EQ tan x frac as} where the coefficients do not even follow a clear pattern.

\paragraph{}

Also, because we no longer require the functions involved to be holomorphic on their domain, singularities may easily show up in the (intermediate) approximations $g_{i}^{[n]}$. So, in order to make sense of these fractional approximation systems, let us make the following definition. We call a piecewise differentiable path $\gamma\colon [0,1]\to U$ with $\gamma(0)=u$, a convergent path for our AS, if for all sufficiently large $n$ we can evaluate $g^{[n]}$ along $\gamma$, i.e. the $g_i^{[n]}$ can all be recursively determined as long as we carry out all steps of our approximation system along $\gamma$. A domain of convergence could then be defined as a subset $U^C$ in $U$, such that for any point $p\in U^C$, we have that for some and for any convergent path $\gamma$ in $U^C$ which connects the base point $u$ with $p$, it holds that $g^{[n]}(p)$ (evaluated along $\gamma$) converges to $g(p)$.

\paragraph{}

In numerical simulations we have observed that these fractional AS's all seem to have a domain of convergence in the above sense. Example \eqref{EQ x^a frac as} exhibits an even more striking convergence behaviour, which clearly distinguishes the kind of convergence of fractional AS's from the convergence of Taylor series. By picking a non-integer value for $a$, the function $x^a$ will get a branched natural domain. The approximations provided by the fractional AS now seem to extend nicely to those other branches as well. This means that if we carry out the approximations along a path $\gamma$ circling once or more around the branch point, the function values of $g^{[n]}$ along $\gamma$ converge to the same values one would obtain from analytic continuation of $x^a$ along the path $\gamma$. This suggests that it would be possible (or even preferable) to define the notion of approximation systems on general Riemann surfaces, rather than merely on open subsets of the complex plane. In fact, as the domain of $x^a$ has just one branch point, we may in this particular case use the simple coordinate transformation $\psi(x) = e^x$ in Lemma \ref{TranLem}, to restate \eqref{EQ x^a frac as} as follows:
\begin{equation}
e^{a x} \quad =\quad
1+\displaystyle\int_0^x\cfrac{a\; e^{x_1}\; dx_1}{
1+\displaystyle\int_0^{x_1}\cfrac{(1-a)\; e^{x_1}\; dx_2}{
1+\displaystyle\int_0^{x_2}\cfrac{a\; e^{x_1}\; dx_3}{
1+\displaystyle\int_0^{x_3}\cfrac{\ (1-a)\; e^{x_1}\; dx_4\ }{1+\ldots}
}}}\label{EQ e^ax frac as}
\end{equation}
the domain of $e^{a x}$ being just $\C$. Again, it is suggested by numerical simulations that similar convergence behaviour as in \eqref{EQ x^a frac as}, is also present in other examples, including fractional AS's systems that are not repetitive.

Regarding the question in which cases and under what conditions these fractional AS's provide accurate approximations, further research is required in order to provide formal proofs and counterexamples.


\bibliography{1205.6370v3}

\begin{thebibliography}{10}

\bibitem{CoLe}
E.~A. Coddington and N.~Levinson.
\newblock {\em Theory of ordinary differential equations}.
\newblock McGraw-Hill Book Company, Inc., New York-Toronto-London, 1955.

\bibitem{Er}
A.~Erd{\'e}lyi, W.~Magnus, F.~Oberhettinger, and F.~G. Tricomi.
\newblock {\em Tables of integral transforms. {V}ol. {II}}.
\newblock McGraw-Hill Book Company, Inc., New York-Toronto-London, 1954.

\bibitem{Ga}
S.~G. Gal.
\newblock {\em Overconvergence in complex approximation}.
\newblock Springer, New York, 2013.

\bibitem{Gr}
L.~J. Grimm.
\newblock Existence and uniqueness for nonlinear neutral-differential
  equations.
\newblock {\em Bull. Amer. Math. Soc.}, 77:374--376, 1971.

\bibitem{HuSe}
P.~M. Hummel and C.~L. Seebeck, Jr.
\newblock A generalization of {T}aylor's expansion.
\newblock {\em Amer. Math. Monthly}, 56:243--247, 1949.

\bibitem{IsSt}
M.~E.~H. Ismail and D.~Stanton.
\newblock Applications of {$q$}-{T}aylor theorems.
\newblock {\em J. Comput. Appl. Math.}, 153(1-2):259--272, 2003.

\bibitem{Ka}
V.~Katsnelson.
\newblock On summation of the {T}aylor series of the function {$1/(1-z)$} by
  the theta summation method.
\newblock In {\em Complex analysis and dynamical systems {VI}. {P}art 2},
  volume 667 of {\em Contemp. Math.}, pages 141--157. Amer. Math. Soc.,
  Providence, RI, 2016, arXiv:1402.4629.

\bibitem{KhBa}
M.~A. Khan and M.~Banerjee.
\newblock Formal reasoning with rough sets in multiple-source approximation
  systems.
\newblock {\em Internat. J. Approx. Reason.}, 49(2):466--477, 2008.

\bibitem{OdSh}
Z.~M. Odibat and N.~T. Shawagfeh.
\newblock Generalized {T}aylor's formula.
\newblock {\em Appl. Math. Comput.}, 186(1):286--293, 2007.

\bibitem{Pe1}
V.~Pessers.
\newblock Approximation systems ({B}enaderingsstelsels, in {D}utch).
\newblock B{S}c {T}hesis, Faculty of Science, University of Amsterdam, 2007.

\bibitem{Pe2}
V.~A. Pessers.
\newblock Expansion systems, 2012, arXiv:1205.6690.
\newblock MSc Thesis, Faculty of Science, University of Amsterdam, 2011.

\bibitem{Si}
B.~Singer.
\newblock Topological approximation systems.
\newblock {\em Stud. Cerc. Mat.}, 39(2):172--183, 1987.

\bibitem{Wa}
J.~L. Walsh.
\newblock Overconvergence, degree of convergence, and zeros of sequences of
  analytic functions.
\newblock {\em Duke Math. J.}, 13:195--234, 1946.

\end{thebibliography}
\bibliographystyle{habbrv}

\end{document}